\def\sharemode{0}
\newcommand{\Ent}{\mathrm{Ent}}
\newcommand{\Spec}{\mathrm{Spec}}
\renewcommand{\Re}{\mathrm{Re}\,}
\renewcommand{\Im}{\mathrm{Im}\,}
\newcommand{\ccF}{\mathcal{F}\!}
\newcommand{\primo}{\mathsf{P}^*}
\newcommand{\duo}{\mathsf{D}^*}
\def\stocmode{0}
\def\jamesmode{0}
\def\arxivmode{1}
\def\fastmode{0}
\def\showauthornotes{2}
\def\showkeys{0}
\def\showdraftbox{1}
\def\showcolorlinks{1}
\def\usemicrotype{1}
\def\showfixme{1}
\newtheorem{theorem}{Theorem}[section]
\newtheorem*{theorem*}{Theorem}
\newtheorem*{proposition*}{Proposition}
\newtheorem{lemma}[theorem]{Lemma}
\newtheorem*{lemma*}{Lemma}
\newtheorem*{conjecture*}{Conjecture}
\newtheorem*{fact*}{Fact}
\newtheorem*{exercise*}{Exercise}
\newtheorem*{hypothesis*}{Hypothesis}
\theoremstyle{definition}
\newtheorem{exercise-easy}[theorem]{Exercise}
\newtheorem{exercise-med}[theorem]{Exercise}
\newtheorem{exercise-hard}[theorem]{Exercise$^\star$}
\newtheorem{claim}[theorem]{Claim}
\newtheorem*{claim*}{Claim}
\newtheorem*{remark*}{Remark}
\newtheorem*{observation*}{Observation}
\let\mathbb\varmathbb
\definecolor{bleudefrance}{rgb}{0.01, 0.1, 1.0}
\definecolor{azure}{rgb}{0.0, 0.5, 1.0}
\newcommand{\savehyperref}[2]{\texorpdfstring{\hyperref[#1]{#2}}{#2}}
\newcommand{\Sref}[1]{\hyperref[#1]{\S\ref*{#1}}}
\newcommand{\mynotes}[1]{{\sffamily\small\color{teal}{#1}}\medskip}
\newcommand{\Authornote}[2]{{\sffamily\small\color{blue}{[#1: #2]}}\medskip}
\newcommand{\Authornotecolored}[3]{{\sffamily\small\color{#1}{[#2: #3]}}}
\newcommand{\Authorcomment}[2]{{\sffamily\small\color{gray}{[#1: #2]}}}
\newcommand{\Authorstartcomment}[1]{\sffamily\small\color{gray}[#1: }
\newcommand{\Authorfnote}[2]{\footnote{\color{red}{#1: #2}}}
\newcommand{\Authorfixme}[1]{\Authornote{#1}{\textbf{??}}}
\newcommand{\Authormarginmark}[1]{\marginpar{\textcolor{red}{\fbox{\Large #1:!}}}}
\newcommand{\myexplain}[1]{{\sffamily\small\color{red}{\noindent [Explanation:\medskip\newline \begin{quote}#1\hfill]\end{quote}}}\medskip}
\newcommand{\mynotes}[1]{}
\newcommand{\Authornote}[2]{}
\newcommand{\Authornotecolored}[3]{}
\newcommand{\Authorcomment}[2]{}
\newcommand{\Authorstartcomment}[1]{}
\newcommand{\Authorfnote}[2]{}
\newcommand{\Authorfixme}[1]{}
\newcommand{\Authormarginmark}[1]{}
\newcommand{\myexplain}[1]{}
\renewcommand{\myexplain}[1]{{\sffamily\small\color{red}{\noindent \begin{quote}{\bf Explanation:} \medskip\newline #1\end{quote}}}\medskip}
\newcommand{\E}{\mathbb{E}}
\newcommand{\Psymb}{\mathbb{P}}
\DeclareMathOperator*{\ProbOp}{\Psymb}
\renewcommand{\Pr}{\ProbOp}
\newcommand{\textparen}[1]{\text{(#1)}}
\newcommand{\because}[1]{\textparen{because #1}}
\renewcommand{\because}[1]{\textparen{because #1}}
\newcommand{\seteq}{\mathrel{\mathop:}=}
\newcommand\bdot\bullet
\newcommand{\Ind}{\mathbb I}
\newcommand{\Ind}{\mathds 1}
\DeclareMathOperator{\sign}{sign}
\newcommand{\R}{\mathbb R}
\newcommand{\cF}{\mathcal F}
\renewcommand{\leq}{\leqslant}
\renewcommand{\geq}{\geqslant}
\let\epsilon=\varepsilon
\numberwithin{equation}{section}
\newcommand\MYcurrentlabel{xxx}
\newcommand{\MYstore}[2]{%
  \global\expandafter \def \csname MYMEMORY #1 \endcsname{#2}%
}
\newcommand{\MYload}[1]{%
  \csname MYMEMORY #1 \endcsname%
}
\newcommand{\MYnewlabel}[1]{%
  \renewcommand\MYcurrentlabel{#1}%
  \MYoldlabel{#1}%
}
\newcommand{\MYdummylabel}[1]{}
\newcommand{\torestate}[1]{%
  \let\MYoldlabel\label%
  \let\label\MYnewlabel%
  #1%
  \MYstore{\MYcurrentlabel}{#1}%
  \let\label\MYoldlabel%
}
\newcommand{\restatetheorem}[1]{%
  \let\MYoldlabel\label
  \let\label\MYdummylabel
  \begin{theorem*}[Restatement of \prettyref{#1}]
    \MYload{#1}
  \end{theorem*}
  \let\label\MYoldlabel
}
\newcommand{\restatelemma}[1]{%
  \let\MYoldlabel\label
  \let\label\MYdummylabel
  \begin{lemma*}[Restatement of \prettyref{#1}]
    \MYload{#1}
  \end{lemma*}
  \let\label\MYoldlabel
}
\newcommand{\restateprop}[1]{%
  \let\MYoldlabel\label
  \let\label\MYdummylabel
  \begin{proposition*}[Restatement of \prettyref{#1}]
    \MYload{#1}
  \end{proposition*}
  \let\label\MYoldlabel
}
\newcommand{\restatefact}[1]{%
  \let\MYoldlabel\label
  \let\label\MYdummylabel
  \begin{fact*}[Restatement of \prettyref{#1}]
    \MYload{#1}
  \end{fact*}
  \let\label\MYoldlabel
}
\newcommand{\restate}[1]{%
  \let\MYoldlabel\label
  \let\label\MYdummylabel
  \MYload{#1}
  \let\label\MYoldlabel
}
\newcommand{\addreferencesection}{
  \phantomsection
\ifnum\stocmode=0
  \addcontentsline{toc}{section}{References}
\else
  \addcontentsline{toc}{section}{References \hspace*{1in} --------- End of extended abstract ---------}
\fi

}
\newcommand{\e}{\epsilon}
\let\origparagraph\paragraph
\renewcommand{\paragraph}[1]{\vspace*{-10pt}\origparagraph{#1.}}
\let\pref=\prettyref
\newcommand{\dmid}{\,\|\,}
\newcommand{\D}{\mathbb{D}}
\renewcommand{\Ind}{\vvmathbb{1}}
\newcommand\f{\varphi}
\newcommand{\F}{\mathcal F}
\begin{document}

\title{Covering the large spectrum and \\ generalized Riesz products}

\author{James R. Lee\footnote{University of Washington}}

\date{}

\maketitle

\begin{abstract}
Chang's Lemma is a widely employed result in additive combinatorics.
It gives bounds on the dimension of the large spectrum
of probability distributions on finite abelian groups.
Recently, Bloom (2016) presented a powerful variant of Chang's Lemma
that yields the strongest known quantitative version of Roth's theorem
on 3-term arithmetic progressions in dense subsets of the integers.
In this note, we show how such theorems
can be derived from
the approximation of probability measures
via entropy maximization.
\end{abstract}

\section{Introduction}

Let $G$ be a finite abelian group.  Chang's Lemma \cite{Chang02} asserts
that, for every large subset $S \subseteq G$,
the large Fourier coefficients of the indicator function $\mathbf{1}_S$
lie in a low-dimensional subspace.
This has seen a number of applications in additive combinatorics
(in addition to Chang's original application to Freiman's theorem).

A theorem of Bloom \cite{Bloom16} shows that a large subset of the large spectrum
can be contained in an even lower-dimensional subspace.
We refer to \pref{sec:large-spectrum} for the formal statements.
Bloom employs his theorem as the key tool in obtaining the following
quantitative version of Roth's theorem.

\begin{theorem}
There exists a $c > 0$ such that
for all sufficiently large $N$, the following holds:
If $A \subseteq \{1,\ldots,N\}$ contains no non-trivial three-term
arithmetic progression, then
\[
|A| \leq c \frac{(\log \log N)^4}{\log N} N\,.
\]
\end{theorem}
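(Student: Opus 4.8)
This is Bloom's theorem from~\cite{Bloom16}; the plan is to derive it by the by-now-standard Fourier-analytic density-increment scheme for Roth's theorem, the one nonstandard ingredient being the improved Chang-type control on the large spectrum (the statements deferred to \pref{sec:large-spectrum}), which in this note is obtained via entropy maximization. Accordingly the outline below focuses on where that spectral input enters and how its quantitative shape produces the exponent $4$; the surrounding iteration is the Bourgain--Sanders Bohr-set machinery.

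First I would reduce to a cyclic group of prime order: pick a prime $N'$ with $2N < N' \le 4N$ and regard $A$ as a subset of $\Z/N'\Z$ supported on $\{1,\dots,N\}$. Since $2N < N'$, a relation $a+c \equiv 2b \pmod{N'}$ with $a,b,c\in A$ forces $a+c=2b$ over $\Z$, so $A$ has no non-trivial three-term progression in $\Z/N'\Z$ either and has density $\alpha \asymp |A|/N$ there. Everything afterwards takes place relative to a shrinking sequence of regular Bohr sets $B_0 \supseteq B_1 \supseteq \cdots$ in $\Z/N'\Z$ with $B_0 = \Z/N'\Z$; the invariant at stage $i$ is that $A$ has density at least some $\alpha_i \ge \alpha$ on $B_i$, a regular Bohr set of dimension $d_i$ and radius $\delta_i$.

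The core is a single increment step. Given that $A$ has density $\alpha' \ge \alpha$ on a regular Bohr set $B$, I would express the $B$-localized count of three-term progressions of $A$ through the Fourier transform --- morally $\sum_r \widehat{1_A}(r)^2\,\widehat{1_A}(-2r)$ --- and use that a progression-free set contributes only the $O(1/N')$ diagonal term. This gives a dichotomy: either $A$ already has density at least $(1+c)\alpha'$ on a slightly smaller regular Bohr set (pass to it and iterate), or the $B$-localized balanced function of $A$ carries spectral $\ell^2$-mass $\sum_{r\in\Delta}|\widehat{1_A}(r)|^2 \gg (\alpha')^2$, where $\Delta$ is the set of frequencies with $|\widehat{1_A}(r)| \ge \rho\alpha'$ at relative threshold $\rho \asymp \alpha'$. \textbf{This is the point where the improved Chang-type lemma is invoked:} the high-energy part of $\Delta$ is controlled by a set $\Lambda$ with $|\Lambda| \ll \rho^{-1}\log(1/\alpha') \asymp (\alpha')^{-1}\log(1/\alpha')$ --- a factor $\rho^{-1}\asymp(\alpha')^{-1}$ smaller than Chang's Lemma gives. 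Feeding $\Lambda$ into the standard $L^2$-mass-concentration estimate (legitimate precisely because we are not in the first case, so $A$ is not Fourier-uniform on the refined Bohr set) upgrades the spectral concentration to a genuine density increment $\alpha' \to (1+c)\alpha'$ --- a constant factor, since the extracted energy is a constant multiple of $\alpha'$ --- on the regular Bohr set $B' = B \cap \mathrm{Bohr}(\Lambda,\delta')$ of dimension $d + |\Lambda|$ and radius $\delta' \gg \delta/\poly(d+|\Lambda|)$.

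Finally I iterate and do the bookkeeping. Since the density multiplies by $1+c$ each step and cannot pass $1$, there are $K \ll \log(1/\alpha)$ steps; $\alpha_i \ge \alpha$ bounds each dimension increase by $\ll \alpha^{-1}\log(1/\alpha)$, so the final Bohr set has dimension $D \ll \alpha^{-1}(\log(1/\alpha))^2$, while the radius, shrinking by $1/\poly(D)$ per step over $K$ steps, obeys $\log(1/\delta_{\mathrm{final}}) \ll K\log D \ll (\log(1/\alpha))^2$. The increment dichotomy at stage $i$ is valid only while $B_i$ is large enough to swamp the regularity and diagonal errors, i.e.\ while $\log N' \gg d_i\log(1/\delta_i)$; but if $\alpha$ is too large the iteration is forced to run all $K$ steps (reaching density $>1$), so consistency requires $\log N' \gg D\log(1/\delta_{\mathrm{final}}) \asymp \alpha^{-1}(\log(1/\alpha))^4$. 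Hence $\alpha \ll (\log(1/\alpha))^4/\log N$, and since then $\alpha = \poly(\log\log N)/\log N$ forces $\log(1/\alpha) = O(\log\log N)$, we get $|A| \ll (\log\log N)^4 N/\log N$. The genuinely hard step --- and the reason for the note --- is the spectral lemma of the third paragraph: obtaining the $\rho^{-1}$ rather than $\rho^{-2}$ dependence when covering almost all the energy of the large spectrum. I would prove it not via the classical Rudin-inequality / random-restriction proof of Chang's Lemma, but by approximating $\mathbf{1}_A$ by the maximum-entropy probability measure matching its large Fourier coefficients --- a generalized Riesz product --- and bounding the number of frequencies this needs by a convex-duality / relative-entropy argument; that one-power-of-$\rho$ saving is exactly what turns the exponent into $4$.
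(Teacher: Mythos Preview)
The paper does not contain a proof of this theorem. It is stated in the introduction as Bloom's result and attributed to \cite{Bloom16}; the paper's contribution is a new proof of the spectral ingredient (\pref{thm:bloom} in \pref{sec:large-spectrum}), not of the Roth bound itself. So there is no ``paper's own proof'' to compare your proposal against.

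Your sketch of the density-increment derivation is broadly the right shape and correctly isolates where the entropic spectral lemma enters and why the $\rho^{-1}$ (rather than $\rho^{-2}$) dependence is what drops the exponent to $4$. One point you gloss over: \pref{thm:bloom} as stated only covers a $\delta/2$-fraction of $\Spec_{\delta}(f)$, not all of it, so the increment step in Bloom's actual argument has to cope with a residual uncovered part of the spectrum; handling this is where some genuine work in \cite{Bloom16} lies, and your outline does not address it. But since the present paper defers the entire increment iteration to \cite{Bloom16} and only supplies the covering lemma, the appropriate thing here is simply to cite Bloom for the theorem and reserve the detailed argument for \pref{thm:bloom}.
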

This improves slightly over Sanders' \cite{Sanders11} breakthrough result
that has $(\log \log N)^4$ replaced by $(\log \log N)^6$.

In this note, we state a general approximation theorem
for probability measures on finite spaces equipped
with no algebraic structure.
From this theorem, Bloom's result follows easily.
While Bloom's proof uses the additive structure
in a seemingly fundamental and intricate way,
our argument
is elementary and requires only a direct application
of the fact that the characters
of a finite abelian group
are homomorphisms and bounded in $\ell_{\infty}$.

The statement and proof
are inspired by the ``entropy maximization'' philosophy:
Given a probability measure $\mu$ and a collection
of linear observables $\ccF$, one can find a ``simple'' approximator $\tilde \mu$ (with respect
to $\ccF$) by maximizing the entropy of $\tilde \mu$ over
all probability measures having similar behavior on $\ccF$.

Our use of this philosophy is motivated by the work \cite{LRS15}
where it is employed in the setting of quantum states and von Neumann entropy.
In \cite{IMR14}, the authors use a simple entropy argument
to prove the special case of Chang's Lemma when $G=\mathbb{F}_2^n$.
The entropy-maximization approach
is also related, at least in spirit,
to the works \cite{Gowers10} and \cite{RTTV08} on ``dense model theorems,''
and to a long line of works employing an ``entropy regularizer''
in the setting of convex optimization.
For a discussion of these connections,
additional applications of our sparse approximation
theorem, and further accounts of the use
of relative entropy in additive combinatorics,
we refer to the forthcoming paper of Wolf \cite{Wolf17}.

In the next section, we state and prove
an approximation theorem
in the context of finite probability spaces.
In \pref{sec:large-spectrum}, we prove the results
of Bloom and Chang.


\section{An approximation theorem}

Let $X$ be a finite set equipped with a probability measure $\mu$.
We use $L^2(\mu)$ to denote the Hilbert space of real-valued functions on $X$
equipped with inner product $\langle f,g\rangle = \sum_{x \in X} \mu(x) f(x) g(x)$.
For a function $h : X \to \R$, we will use the notation
$\E_{\mu} h = \sum_{x \in X} \mu(x) h(x)$.
We also denote by $\|h\|_p=(\E_{\mu} |h|^p)^{1/p}$ the $L^p(\mu)$ norm for $p \geq 1$.

 Denote the set of densities with respect to $\mu$ by
$\Delta_X = \{ f : X \to [0,\infty) : \|f\|_1=1\}$.
For $f \in \Delta_X$, define the relative entropy
\[
\Ent_{\mu}(f) = \mathbb E_{\mu} [f \log f]\,.
\]
We will also use the notion of the relative entropy
between two densities $h,h' \in \Delta_X$:
\[
\D_{\mu}(h\dmid h') = \E_{\mu} \left[h \log \frac{h}{h'}\right]\,.
\]
This definition makes sense whenever $\mathrm{supp}(h) \subseteq \mathrm{supp}(h')$.
Otherwise, we take the value to be $+\infty$.

\paragraph{Generalized Riesz products}

Suppose that $\mathcal F \subseteq L^{2}(\mu)$ is a collection
 satisfying $\sup_{\varphi \in \mathcal F} \|\varphi\|_{\infty} \leq 1$.
 Define the semi-norm $\|f\|_{\mathcal F} = \sup_{\varphi \in \mathcal F} |\langle \varphi, f\rangle|$.
Say that a function $R \in L^2(\mu)$ is a {\em degree-$d$ Riesz $\ccF$-product} if
\[
R(x) = \prod_{i=1}^d (1+\e_i \varphi_i(x))
\]
for some $d \geq 1$ and $\varphi_1, \ldots, \varphi_d \in \mathcal F$, $\e_1, \ldots, \e_d \in \{-1,0,1\}$.
Observe that every such $R$ is non-negative on $X$.


\begin{theorem}[Sparse approximation theorem]\label{thm:approx}
For every $0 < \eta < \frac{1}{e^3}$ and $f \in \Delta_X$,
there is a $g \in \Delta_X$ such that:
\begin{enumerate}
\item $\|f-g\|_{\ccF} \leq \eta\,.$
\item There is a subset $\cF' \subseteq \cF$ with
   \begin{equation}\label{eq:junta-size}
      |\cF'| \leq 9 \frac{\Ent_{\mu}(f)}{\eta^2}\,,
   \end{equation}
   and such that $g$ is a non-negative linear combination of degree-$d$ Riesz $\cF'$-products for
   \begin{equation}
   d \leq 12 \frac{\Ent_{\mu}(f)}{\eta} + O\left(\frac{\log \frac{1}{\eta}}{\log \log \frac{1}{\eta}}\right) \label{eq:deg}
   \end{equation}
\end{enumerate}
\end{theorem}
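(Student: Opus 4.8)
Here is how I would approach the proof.

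The plan is to produce $g$ by the iterative entropy-projection (multiplicative-weights) scheme, using relative entropy as a potential, and then to truncate the resulting Riesz product to bounded degree. Fix $\alpha:=c\eta$ for a small absolute constant $c\in(0,1)$ and a target accuracy $\eta':=\eta\big(1-1/\log\tfrac1\eta\big)<\eta$. Start from $g_0\equiv 1$. Given a density $g_t\in\Delta_X$: if $\|f-g_t\|_{\cF}\le\eta'$, stop; otherwise pick $\varphi_t\in\cF$ and a sign $\sigma_t\in\{-1,1\}$ with $\sigma_t\langle\varphi_t,f-g_t\rangle>\eta'$, and set
\[
g_{t+1}:=\frac{g_t\,(1+\alpha\sigma_t\varphi_t)}{Z_t},\qquad Z_t:=\E_\mu\!\big[g_t(1+\alpha\sigma_t\varphi_t)\big]=1+\alpha\sigma_t\langle\varphi_t,g_t\rangle.
\]
Since $\|\varphi_t\|_\infty\le1$ and $\alpha<1$, the factor $1+\alpha\sigma_t\varphi_t$ lies in $[1-\alpha,1+\alpha]$, so $Z_t\ge1-\alpha>0$, every $g_t$ is a strictly positive density, and $\D_\mu(f\dmid g_t)$ is finite.

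Next, track the potential $\Phi_t:=\D_\mu(f\dmid g_t)$. Then $\Phi_0=\Ent_\mu(f)$ and $\Phi_t\ge0$, and
\[
\Phi_t-\Phi_{t+1}=\E_\mu\!\big[f\log(1+\alpha\sigma_t\varphi_t)\big]-\log Z_t\;\ge\;\alpha\sigma_t\langle\varphi_t,f-g_t\rangle-\frac{\alpha^2}{2(1-\alpha)}\;>\;\alpha\eta'-\frac{\alpha^2}{2(1-\alpha)},
\]
where the first term is bounded below using $\log(1+u)\ge u-\tfrac{u^2}{2(1-\alpha)}$ for $|u|\le\alpha$ together with $\varphi_t^2\le1$ and $\E_\mu f=1$, and $\log Z_t$ above using $\log(1+u)\le u$. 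For $\alpha$ a suitable constant multiple of $\eta$ the per-step decrease is at least $\eta^2/9$, so the process halts after $T\le 9\,\Ent_\mu(f)/\eta^2$ steps. Setting $\cF':=\{\varphi_0,\dots,\varphi_{T-1}\}$ gives $|\cF'|\le T\le 9\,\Ent_\mu(f)/\eta^2$, which is \eqref{eq:junta-size}, and the output $g_T$ satisfies $\|f-g_T\|_{\cF}\le\eta'$.

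Now examine the structure of $g_T$. Writing $c_0:=1/\prod_tZ_t>0$ we have $g_T=c_0G_T$ with $G_T:=\prod_{t=0}^{T-1}(1+\alpha\sigma_t\varphi_t)$. Expanding each factor as the convex combination $1+\alpha\sigma_t\varphi_t=(1-\alpha)\cdot1+\alpha\,(1+\sigma_t\varphi_t)$ and multiplying out,
\[
G_T=\sum_{S\subseteq[T]}(1-\alpha)^{T-|S|}\alpha^{|S|}\,R_S,\qquad R_S:=\prod_{t\in S}(1+\sigma_t\varphi_t),
\]
so $g_T$ is already a non-negative combination of the Riesz $\cF'$-products $R_S$ — but $R_S$ has degree $|S|$, possibly as large as $T$ (which is of order $\Ent_\mu(f)/\eta^2$). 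Define $g$ by discarding the terms with $|S|>d$ and renormalizing, for $d$ to be chosen; padding each surviving $R_S$ with trivial factors $(1+0\cdot\varphi)$ (legal since $\varepsilon_i\in\{-1,0,1\}$) turns it into a genuine degree-$d$ Riesz $\cF'$-product, so $g\in\Delta_X$ is a non-negative combination of degree-$d$ Riesz $\cF'$-products.

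It remains to fix $d$ so that $\|f-g\|_{\cF}\le\|f-g_T\|_{\cF}+\|g-g_T\|_{\cF}\le\eta'+\|g-g_T\|_{\cF}\le\eta$; this is the crux. A short computation (using $g\in\Delta_X$ and the non-negativity of the discarded part) shows $\|g-g_T\|_{\cF}\le 2r$, where $r$ is the fraction of the (unnormalized) $\mu$-mass of $G_T$ sitting in the discarded terms, $r=\E_\mu\big[\sum_{|S|>d}(1-\alpha)^{T-|S|}\alpha^{|S|}R_S\big]\big/\E_\mu[G_T]$; crucially the potentially enormous constant $c_0$ cancels. Now $r=\Pr[\,|\widehat S|>d\,]$, where $\widehat S$ is the random subset of $[T]$ with law proportional to $(1-\alpha)^{T-|S|}\alpha^{|S|}\E_\mu[R_S]$, and for $z\ge1$,
\[
\E\big[z^{|\widehat S|}\big]=\frac{\E_\mu\!\big[\prod_t\big((1-\alpha+z\alpha)+z\alpha\sigma_t\varphi_t\big)\big]}{\E_\mu[G_T]}\;\le\;\Big(1+\frac{2z\alpha}{1-\alpha}\Big)^{T},
\]
each $\mu$-expectation being evaluated by peeling off one factor at a time, every resulting factor involving an inner product in $[-1,1]$ — bound the numerator above and the denominator $(=\prod_tZ_t)$ below. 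Markov's inequality then gives $r\le z^{-d}\exp\!\big(\tfrac{2z\alpha T}{1-\alpha}\big)$. Since $\alpha T$ is at most an absolute constant times $\Ent_\mu(f)/\eta$, one now chooses the tilt $z$ to make $\|g-g_T\|_{\cF}\le\eta-\eta'$: a constant $z$ when $\Ent_\mu(f)/\eta$ dominates $\log\tfrac1\eta$ — so that the $12\,\Ent_\mu(f)/\eta$ term swallows everything, including the $\tfrac{\log(1/\eta)}{\log z}$ arising because $\eta-\eta'\ge\eta/\polylog(1/\eta)$ — and a mildly growing $z$ in the complementary regime, where $\alpha T$ is small enough that $\tfrac{2z\alpha T}{(1-\alpha)\log z}$ stays controlled while $\tfrac{\log(1/\eta)}{\log z}$ shrinks to $O\!\big(\tfrac{\log(1/\eta)}{\log\log(1/\eta)}\big)$. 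Either way the choice succeeds once $d$ meets \eqref{eq:deg}, whence $\|f-g\|_{\cF}\le\eta$. I expect this last step to be the main obstacle: one must notice the cancellation of $c_0$, estimate the tilted tail of $\widehat S$ sharply enough to land the constant $12$, and split into the two regimes so that the residual term is the sub-logarithmic $O(\log(1/\eta)/\log\log(1/\eta))$ rather than $O(\log(1/\eta))$. The earlier steps are otherwise the standard multiplicative-weights / entropy-projection machinery.
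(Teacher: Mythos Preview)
Your approach is correct and follows the same overall strategy as the paper: run a mirror-descent/multiplicative-weights iteration with relative entropy as the potential, then truncate the resulting density to a low-degree Riesz combination. The paper uses a continuous-time formulation, setting $g_t=\exp\big(\int_0^t\varphi_s\,ds\big)/\E_\mu[\cdot]$ with piecewise-constant $\varphi_s$; your discrete step count $T$ corresponds to the paper's number of phases $i_{\max}$ (each phase has length $\geq\eta/3$ because $\tfrac{d}{dt}\langle\varphi,g_t\rangle\in[-1,0]$), and your $\alpha T$ corresponds to the paper's total running time. The bounds $9\,\Ent_\mu(f)/\eta^2$ and $O(\Ent_\mu(f)/\eta)$ arise in both settings by the same potential argument.

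The one genuinely different technical device is the truncation step. Because the paper's $g_T$ is an honest exponential $\exp(\psi)$ with $\psi=\sum_\varphi c_\varphi(1+\varphi)\ge 0$ and $\|\psi\|_\infty\le 2T$, the paper simply replaces $e^\psi$ by its degree-$m$ Taylor polynomial $p_m(\psi)$; each $\psi^j$ is automatically a non-negative combination of degree-$j$ Riesz products, and Taylor's remainder bound $B^{m+1}/(m+1)!$ with $B=2T$ immediately yields $m\le 6T+O(\log(1/\eta)/\log\log(1/\eta))$. Your route—expanding $\prod_t(1+\alpha\sigma_t\varphi_t)$ binomially and applying a tilted Chernoff bound to $|\widehat S|$—is valid and the peeling that gives $\E[z^{|\widehat S|}]\le(1+2z\alpha/(1-\alpha))^T$ is correct, but it is more intricate, and the regime split you anticipate is exactly the work that the Taylor-remainder estimate sidesteps. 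Either truncation gives the right shape for \eqref{eq:deg}; the paper's is cleaner because the continuous-time output is literally an exponential.
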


While \pref{thm:approx} yields a result that is closely related to Chang's Lemma
and is sufficient for the case $G=\mathbb{F}_2^n$, it seems that a more delicate property
is required to recover the full statement.
Say that the family $\cF$ is {\em Laplace pseudorandom} if 
for every collection $\{ \lambda_{\f} : \f \in \cF\}$ of real numbers,
the following property holds:
\begin{equation}\label{eq:laplace}
   \log \E_{\mu} \left[\exp\left(\sum_{\f \in \cF} \lambda_{\f} \f\right)\right]
   \leq \frac12 \sum_{\f \in \cF} \lambda_{\f}^2\,.
\end{equation}

\begin{lemma}\label{lem:laplace}
   If $\cF$ is Laplace pseudorandom then for any $f \in \Delta_X$, it holds that
   \[
      \sum_{\f \in \cF} \langle f, \f\rangle^2 \leq 2\, \Ent_{\mu}(f)\,.
   \]
\end{lemma}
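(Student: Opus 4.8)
The plan is to derive the bound from the Gibbs variational principle for relative entropy --- the same ``entropy maximization'' duality that underlies the rest of the note --- together with the hypothesis \pref{eq:laplace}. The key tool is the elementary observation that for any $h \colon X \to \R$ the function $g_h = e^h/\E_\mu[e^h]$ lies in $\Delta_X$, and a direct computation using $\|f\|_1 = 1$ gives
\[
   \D_\mu(f \dmid g_h) = \Ent_\mu(f) - \langle f, h\rangle + \log \E_\mu[e^h]\,.
\]
Since $\D_\mu(f\dmid g_h) \geq 0$ by Jensen's inequality, this rearranges to the variational bound $\Ent_\mu(f) \geq \langle f, h\rangle - \log \E_\mu[e^h]$, valid for every $h \colon X \to \R$.

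First I would apply this with $h = \sum_{\f \in \cF} \lambda_\f \f$ for an arbitrary (finitely supported, if $\cF$ is infinite) family of reals $\{\lambda_\f\}$. By linearity $\langle f, h\rangle = \sum_\f \lambda_\f \langle f, \f\rangle$, and by \pref{eq:laplace} we have $\log \E_\mu[e^h] \leq \tfrac12 \sum_\f \lambda_\f^2$; hence
\[
   \Ent_\mu(f) \;\geq\; \sum_{\f \in \cF} \lambda_\f \langle f, \f\rangle \;-\; \frac12 \sum_{\f \in \cF} \lambda_\f^2\,.
\]
The right-hand side decouples into the one-dimensional quadratics $\lambda_\f \langle f,\f\rangle - \tfrac12 \lambda_\f^2$, each maximized at $\lambda_\f = \langle f, \f\rangle$ with value $\tfrac12 \langle f, \f\rangle^2$. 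Choosing $\lambda_\f = \langle f, \f\rangle$ therefore yields $\Ent_\mu(f) \geq \tfrac12 \sum_{\f \in \cF} \langle f, \f\rangle^2$, which is exactly the claimed inequality.

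This argument is short once the correct duality is identified, so there is no real obstacle. The only points deserving a line of care are checking that $g_h$ is a genuine density (so that Gibbs' inequality applies) and, when $\cF$ is infinite, running the last step over each finite subset $\cF_0 \subseteq \cF$ with $\lambda_\f = \langle f,\f\rangle$ for $\f \in \cF_0$ and $\lambda_\f = 0$ otherwise, then taking the supremum over $\cF_0$ --- both are immediate because $X$ is finite.
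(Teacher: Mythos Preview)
Your proof is correct and is essentially the same approach as the paper's: both plug $\lambda_{\f} = \langle f,\f\rangle$ into the variational inequality $\Ent_{\mu}(f) \geq \sum_{\f} \lambda_{\f} \langle f,\f\rangle - \log \E_{\mu}\exp(\sum_{\f} \lambda_{\f}\f)$ and then apply \eqref{eq:laplace}. The only cosmetic difference is that the paper packages this inequality as weak duality for the primal/dual pair \eqref{eq:primal}--\eqref{eq:dual} (with $\delta=0$), which forces a sign constraint $\lambda_{\f}\geq 0$ and hence the auxiliary step of replacing each $\f$ by $\sign(\langle f,\f\rangle)\,\f$; your direct appeal to Gibbs' inequality $\D_{\mu}(f\dmid g_h)\geq 0$ avoids that detour.
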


\subsection{Duality theory for relative entropy minimization}

\pref{lem:laplace} and part of \pref{thm:approx} can be proved
using only elementary properties of duality for optimization
of convex functions over polytopes.
Establishing the bound \eqref{eq:junta-size} will require
an iterative algorithm described in \pref{sec:proofs}.

Fix some $f \in \Delta_X$, a finite collection $\cF_0 \subseteq L^2(\mu)$, and
a parameter $\delta \geq 0$.
Consider the optimization:
\begin{align}
   \textrm{mininize}& \quad \Ent_{\mu}(g) \label{eq:primal}\\
   \textrm{subject to}& \quad g \in \Delta_X \nonumber\\
                      &\quad \langle g,\f\rangle \geq \langle f,\f\rangle - \delta\quad \forall \f \in \cF_0\,.\nonumber
\end{align}
Note that we are minimizing a strongly convex function over a non-empty, compact polytope (since $f$ itself
satisfies all the constraints), and thus \eqref{eq:primal} has a unique optimal solution.
The corresponding dual optimization is
\begin{align}\label{eq:dual}
   \textrm{maximize}& \quad 
   - \log \left(\E_{\mu} \exp\left(\sum_{\f \in \cF_0} \lambda_{\f} \f\right)\right)
   + \sum_{\f \in \cF_0} \lambda_\f \left(\langle f,\f\rangle-\delta\right)
   \\
   \vphantom{\frac{\bigoplus}{\bigoplus}}
   \textrm{subject to}& \quad \lambda_{\f} \geq 0 \qquad \forall \f \in \cF_0\,.\nonumber
\end{align}
See, for instance, \cite[\S 5.2.4]{BV04}.

Let $\primo$ and $\duo$ denote the optimal values of \eqref{eq:primal} and \eqref{eq:dual}, respectively.
By weak duality, the inequality $\primo \geq \duo$ always holds.
Let us use this fact to prove \pref{lem:laplace}.

\begin{proof}[Proof of \pref{lem:laplace}]
   Consider the optimizations \eqref{eq:primal} and \eqref{eq:dual} with $\delta=0$ and
   \[
      \cF_0 = \left\{ \sign(\langle f,\f\rangle)\, \f : \f \in \cF \right\}
   \]
   so that $\langle f,\f\rangle \geq 0$ for $\f \in \cF_0$.
   Then by weak duality:
   \begin{align*}
      \Ent_{\mu}(f) \geq \primo \geq \duo 
      &\geq
      - \log \left(\E_{\mu} \exp\left(\sum_{\f \in \cF_0} \langle f,\f\rangle \,\f\right)\right) 
      + \sum_{\f \in \cF_0} \langle f,\f\rangle^2\,,
   \end{align*}
   where the last inequality employs the feasible solution $\{\lambda_{\f} = \langle f,\f\rangle : \f \in \cF_0\}$.

   Using the assumption that $\cF$ is Laplace pseudorandom, this yields
   \[
      \Ent_{\mu}(f) \geq \frac12 \sum_{\f \in \cF} \langle f,\f\rangle^2\,,
   \]
   completing the proof.
\end{proof}

For $\delta > 0$,
the optimization \eqref{eq:primal} is strictly feasible since (as witnessed by $f$),
and hence Slater's theorem implies that strong duality holds and $\primo = \duo$
(see, e.g., \cite[\S 5.3.2]{BV04}).
In this case, the KKT conditions hold, i.e., the gradient of the Lagrangian
is identically zero at the optimal solution.

Let $(g^*, \{\lambda_{\f}^*\})$ denote the corresponding optimal primal-dual pair.
The gradient condition yields
\begin{equation}\label{eq:primal-opt}
   g^* = \frac{\exp\left(\sum_{\f \in \cF_0} \lambda^*_{\f} \f\right)}{\E_{\mu} \exp\left(\sum_{\f \in \cF_0} \lambda^*_{\f} \f\right)}\,.
\end{equation}
It follows that
\begin{equation}\label{eq:dual-one}
   \Ent_{\mu}(f) - \D_{\mu}(f \dmid g^*) = \mathbb{E}_{\mu}\left[f \log g^*\right]
   = \duo + \delta \sum_{\f \in \cF_0} \lambda^*_{\f}\,,
\end{equation}
where the latter equality uses $\E_{\mu} f = 1$.

\begin{lemma}\label{lem:dual-bound}
   For every $\delta > 0$, the optimal solution $\{\lambda_{\f}^*\}$ of \eqref{eq:dual} satisfies
   \[
      \sum_{\f \in \cF_0} \lambda^*_{\f} \leq \frac{\Ent_{\mu}(f)}{\delta}\,.
   \]
\end{lemma}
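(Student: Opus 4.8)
The plan is to read the bound off directly from the identity \eqref{eq:dual-one}, which was already obtained from the KKT conditions. Rearranging that identity gives
\[
\delta \sum_{\f \in \cF_0} \lambda^*_\f = \Ent_{\mu}(f) - \D_{\mu}(f \dmid g^*) - \duo\,,
\]
so it suffices to show that $\D_{\mu}(f \dmid g^*) \geq 0$ and that $\duo \geq 0$, since then the right-hand side is at most $\Ent_{\mu}(f)$ and dividing by $\delta > 0$ finishes the proof.

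For the first fact I would note that $g^*$, as given by the closed form \eqref{eq:primal-opt}, is strictly positive on all of $X$; hence $\supp(f) \subseteq \supp(g^*)$, the quantity $\D_{\mu}(f \dmid g^*)$ is a genuine (finite) relative entropy rather than the $+\infty$ convention, and it is non-negative by the usual convexity/Jensen argument. For the second fact, the cleanest route is to exhibit a dual-feasible point: the all-zeros vector $\lambda \equiv 0$ satisfies the only constraints $\lambda_\f \geq 0$ of \eqref{eq:dual}, and its objective value is $-\log\bigl(\E_{\mu}\exp(0)\bigr) + 0 = -\log 1 = 0$. Since \eqref{eq:dual} is a maximization, $\duo \geq 0$. (Alternatively one could invoke strong duality, $\duo = \primo = \Ent_{\mu}(g^*) \geq 0$, the last step by Jensen applied to $t \mapsto t\log t$ together with $\E_{\mu} g^* = 1$; but the explicit feasible point is more elementary and avoids appealing to Slater's condition again.)

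I do not expect any real obstacle here: the substantive work was done in setting up the primal/dual pair and deriving \eqref{eq:dual-one}, and what remains is soft. The only points requiring a line of care are confirming that $g^*$ has full support (so that the relative-entropy term behaves as a non-negative real number and not merely by the $+\infty$ convention) and writing down the concrete dual-feasible point that certifies $\duo \geq 0$.
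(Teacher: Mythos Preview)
Your proposal is correct and follows essentially the same approach as the paper: both rearrange \eqref{eq:dual-one}, use $\duo \geq 0$ via the feasible point $\lambda_\f \equiv 0$, and use non-negativity of $\D_{\mu}(f \dmid g^*)$. Your extra care about the full support of $g^*$ is a nice clarification but not a departure from the paper's argument.
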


\begin{proof}
   Note that $\duo \geq 0$ because $\lambda_{\f} \equiv 0$ is a feasible solution.
   Therefore \eqref{eq:dual-one} yields
   \[
      \delta \sum_{\f \in \cF_0} \lambda^*_{\f} \leq \Ent_{\mu}(f) - D_{\mu}(f \dmid g^*) \leq \Ent_{\mu}(f)\,.\qedhere
   \]
\end{proof}

\subsection{Truncating the exponential}
\label{sec:truncate}

Let us now move on to the proof of \pref{thm:approx}.

\begin{lemma}\label{lem:truncate}
   Suppose that $\|\f\|_{\infty} \leq 1$ for $\f \in \cF_0 \subseteq L^2(\mu)$.
   Consider non-negative numbers $\{c_{\f} : \f \in \cF_0\}$ and
   \[
      h = \exp\left(\sum_{\f \in \cF_0} c_{\f} (1+\f)\right)\,.
   \]
   Then for every $0 < \eta < \frac{1}{e^3}$,
   there is a density $\tilde h \in \Delta_X$ that is a non-negative
   linear combination of degree-$d$ Riesz $\cF_0$-products and such that
   \[
      d \leq 6 \sum_{\f \in \cF_0} c_{\f} + O\left(\frac{\log \frac{1}{\eta}}{\log \log \frac{1}{\eta}}\right)\,,
   \]
   and
   \[
      \left\|\frac{h}{\E_{\mu} h}-\tilde{h}\right\|_1 \leq \eta\,.
   \]
\end{lemma}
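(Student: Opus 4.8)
The plan is to expand the exponential $h = \exp\bigl(\sum_{\f \in \cF_0} c_\f (1+\f)\bigr)$ as a power series and keep only the first few terms, arguing that the tail is negligible after normalization. Write $c = \sum_{\f \in \cF_0} c_\f$ and note that, since $\|\f\|_\infty \le 1$, the argument of the exponential lies in $[0, 2c]$ pointwise, so $h$ takes values in $[1, e^{2c}]$ and in particular $\E_\mu h \ge 1$. The key observation is that each partial product $\prod_{i} (1 + \f_i)$ appearing in the multinomial expansion of $\bigl(\sum_\f c_\f(1+\f)\bigr)^k$ is, up to the nonnegative scalar $\prod_i c_{\f_i}$, a degree-$k$ Riesz $\cF_0$-product (with all $\e_i = 1$); hence the truncated sum $S_D := \sum_{k=0}^{D} \frac{1}{k!}\bigl(\sum_\f c_\f(1+\f)\bigr)^k$ is a nonnegative linear combination of degree-$\le D$ Riesz $\cF_0$-products. (To make every term exactly degree $D$ rather than degree $\le D$, one can pad with copies of $1 = (1 + 0\cdot\f)$, using the $\e_i = 0$ option, which is precisely why that option is allowed in the definition.)

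First I would bound the truncation error $\|h - S_D\|_\infty$. Since the summands are nonnegative, $0 \le h(x) - S_D(x) = \sum_{k > D} \frac{(\sum_\f c_\f(1+\f)(x))^k}{k!} \le \sum_{k>D} \frac{(2c)^k}{k!}$, and the standard tail estimate for the exponential series gives $\sum_{k>D} \frac{(2c)^k}{k!} \le \frac{(2c)^{D+1}}{(D+1)!}\cdot\frac{1}{1 - 2c/(D+2)} \le 2\bigl(\frac{2ec}{D+1}\bigr)^{D+1}$ once $D \ge 4c$, say. Next I would set $\tilde h := S_D / \E_\mu S_D$ and estimate $\|h/\E_\mu h - \tilde h\|_1$. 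Using $\E_\mu h \ge 1$ and $|\E_\mu h - \E_\mu S_D| \le \|h - S_D\|_\infty =: \tau$, a routine triangle-inequality computation ($\|a/\alpha - b/\beta\|_1 \le \frac{1}{\alpha}\|a-b\|_1 + \|b\|_1\frac{|\alpha-\beta|}{\alpha\beta}$, with $\beta = \E_\mu S_D \ge \E_\mu h - \tau \ge 1 - \tau$) shows $\|h/\E_\mu h - \tilde h\|_1 \le \frac{2\tau}{1-\tau} \le 4\tau$ for $\tau \le \tfrac12$. So it suffices to choose $D$ so that $2\bigl(\frac{2ec}{D+1}\bigr)^{D+1} \le \eta/4$.

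The remaining step is to check that this is achievable with $D \le 6c + O\bigl(\frac{\log(1/\eta)}{\log\log(1/\eta)}\bigr)$. Taking $D+1 = \lceil 6c \rceil + m$ with $m = O\bigl(\frac{\log(1/\eta)}{\log\log(1/\eta)}\bigr)$, one has $\frac{2ec}{D+1} \le \frac{2ec}{6c} = \frac{e}{3} < 1$, so the base is bounded away from $1$; and to beat $\eta$ one needs $(e/3)^{D+1} \lesssim \eta$, i.e. $D + 1 \gtrsim \log(1/\eta)$ — which would only give an additive $O(\log \tfrac1\eta)$, not the claimed $O\bigl(\frac{\log(1/\eta)}{\log\log(1/\eta)}\bigr)$. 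The sharper bound comes from not throwing away the factorial: we instead want $\bigl(\frac{2ec}{D+1}\bigr)^{D+1}$ small, and when $m \gg c$ the relevant regime is $D+1 \approx m$ with the quantity behaving like $(C/m)^{m}$ for a constant $C$; solving $(C/m)^m \le \eta$ gives $m \log(m/C) \ge \log(1/\eta)$, hence $m = \Theta\bigl(\frac{\log(1/\eta)}{\log\log(1/\eta)}\bigr)$. Pinning down this asymptotic — i.e. verifying that $m \asymp \frac{\log(1/\eta)}{\log\log(1/\eta)}$ makes $\frac{(2c)^{D+1}}{(D+1)!} \le \eta/8$ uniformly in $c$, splitting into the cases $c \lesssim m$ and $c \gtrsim m$ — is the one genuinely delicate point; everything else is bookkeeping. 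The hypothesis $\eta < e^{-3}$ is there exactly to guarantee $\log(1/\eta) > 3$ so that $\log\log(1/\eta) > 0$ and the $O(\cdot)$ term is well-defined and positive.
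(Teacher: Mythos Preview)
Your approach is essentially the same as the paper's: truncate the Taylor series of the exponential at degree $m$, observe that $p_m(\psi)$ with $\psi = \sum_{\f} c_\f(1+\f)$ is a nonnegative combination of degree-$m$ Riesz $\cF_0$-products, normalize, and bound the error. The one minor difference is in how the normalization error is controlled. You bound the \emph{absolute} truncation error $\|h - S_D\|_\infty \leq \sum_{k>D} (2c)^k/k!$ and then invoke $\E_\mu h \geq 1$; the paper instead uses the \emph{relative} Taylor remainder bound
\[
\sup_{x\in[0,B]}\frac{|e^x - p_m(x)|}{e^x} \;\leq\; \frac{B^{m+1}}{(m+1)!}\qquad (B=2c),
\]
which immediately gives $\|e^\psi - p_m(\psi)\|_1 \leq \tfrac{\eta}{2}\,\E_\mu[e^\psi]$ and makes the triangle-inequality step one line, with no need to appeal separately to $\E_\mu h \geq 1$. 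Either route works; the paper's is slightly cleaner bookkeeping. Your identification of the ``delicate point''---that the additive $O\!\bigl(\tfrac{\log(1/\eta)}{\log\log(1/\eta)}\bigr)$ comes from solving $B^{m+1}/(m+1)! \leq \eta$ rather than from the cruder $(e/3)^{m+1} \leq \eta$---matches exactly what the paper asserts without further elaboration.
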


\begin{proof}
   Let $\psi = \sum_{\f \in \cF_0} c_{\f} (1+\f)$ and note that 
   each summand is non-negative (because $\|\f\|_{\infty} \leq 1$) and
   $\|\psi\|_{\infty} \leq 2 c$, where
   $c = \sum_{\f \in \cF_0} c_{\f}$.
Denote $p_m(x) = \sum_{j \leq m} \frac{x^j}{j!}$ and recall from Taylor's thoerem
that for $B \geq 0$,
\[
\sup_{x \in [0,B]} \frac{|e^x - p_m(x)|}{e^x} \leq \frac{B^{m+1}}{(m+1)!}\,.
\]
Let us choose $m \leq 3B + O\left(\frac{\log \frac{1}{\eta}}{\log \log \frac{1}{\eta}}\right)$
so as to make this quantity less than $\eta/2$.
Thus setting $B=2c$
yields
\begin{equation}\label{eq:approx1}
   \|e^{\psi}-p_m(\psi)\|_1 \leq \frac{\eta}{2} \E_{\mu} [e^{\psi}]\,.
\end{equation}

Now define
\[
   \tilde{h} = \frac{p_m(\psi)}{\E_{\mu} p_m(\psi)}\,,
\]
and note that $h$ is a non-negative combination of degree-$m$ Riesz $\cF_0$-products.
Moreover,
\[
   \left\|\frac{h}{\E_{\mu} h} - \tilde h\right\|_1 \leq \left\|\frac{h}{\E_{\mu} h} - \frac{p_m(\psi)}{\E_{\mu} h}\right\|_1 +
   \left\|\frac{p_m(\psi)}{\E_{\mu} h}-\tilde{h}\right\|_1 
   \stackrel{\eqref{eq:approx1}}{\leq} \frac{\eta}{2}
   + \left|\frac{\E_{\mu} p_m(\psi)}{\E_{\mu} h} - 1\right| \stackrel{\eqref{eq:approx1}}{\leq} \eta\,.\qedhere
\]
\end{proof}

We first prove \pref{thm:approx} without the sparsity constraint \eqref{eq:junta-size} since
it follows easily from the machinery we already have.

\begin{theorem}[Low-degree approximation theorem]
   \label{thm:approx2}
For every $0 < \eta < \frac{1}{e^3}$ and $f \in \Delta_X$,
there is a $g \in \Delta_X$ such that:
\begin{enumerate}
\item $\|f-g\|_{\ccF} \leq \eta\,.$
\item $g$ is a non-negative linear combination of degree-$d$ Riesz $\cF$-products for
   \begin{equation}
   d \leq 12 \frac{\Ent_{\mu}(f)}{\eta} + O\left(\frac{\log \frac{1}{\eta}}{\log \log \frac{1}{\eta}}\right)\,.
   \end{equation}
\end{enumerate}
\end{theorem}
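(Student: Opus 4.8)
The plan is to combine the relative-entropy duality of the previous subsection with the exponential-truncation estimate \pref{lem:truncate}. We may assume that $\cF$ is finite (this is the case in all of the intended applications, and it is what lets us set up the optimization \eqref{eq:primal}). The first move is to apply the primal--dual pair \eqref{eq:primal}--\eqref{eq:dual} with the \emph{symmetrized} family
\[
   \cF_0 = \cF \cup (-\cF)
\]
and with the parameter $\delta = \eta/2 > 0$; note $\sup_{\f \in \cF_0}\|\f\|_\infty \leq 1$ since $\|-\psi\|_\infty = \|\psi\|_\infty$. Since $\delta > 0$, the primal problem is strictly feasible (with $f$ itself as a Slater point), so strong duality holds and the KKT stationarity relation \eqref{eq:primal-opt} applies: the unique primal optimum is the Gibbs density
\[
   g^* \;=\; \frac{\exp\bigl(\sum_{\f \in \cF_0} \lambda^*_{\f}\,\f\bigr)}{\E_\mu \exp\bigl(\sum_{\f \in \cF_0} \lambda^*_{\f}\,\f\bigr)}
\]
for the optimal nonnegative dual multipliers $\{\lambda^*_{\f}\}$.

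Next I would read off a \emph{two-sided} estimate on the $\cF$-seminorm error. For each $\f \in \cF$, the primal constraint indexed by $\f \in \cF_0$ gives $\langle g^*, \f\rangle \geq \langle f, \f\rangle - \delta$, while the constraint indexed by $-\f \in \cF_0$ gives $-\langle g^*, \f\rangle \geq -\langle f, \f\rangle - \delta$, i.e.\ $\langle g^*, \f\rangle \leq \langle f,\f\rangle + \delta$. Hence $\|f - g^*\|_{\cF} \leq \delta = \eta/2$. (Symmetrizing $\cF$ is precisely what converts the one-sided linear constraints of \eqref{eq:primal} into control of the symmetric seminorm $\|\cdot\|_{\cF}$.) Applying \pref{lem:dual-bound}, which is valid for any finite $\cF_0$ and any $\delta > 0$, gives the mass bound $\sum_{\f \in \cF_0}\lambda^*_{\f} \leq \Ent_\mu(f)/\delta = 2\,\Ent_\mu(f)/\eta$.

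Now I would feed $g^*$ through \pref{lem:truncate}. Set $c_{\f} = \lambda^*_{\f} \geq 0$ and $h = \exp\bigl(\sum_{\f \in \cF_0} c_{\f}(1+\f)\bigr)$; then $g^* = h/\E_\mu h$ (the constant prefactor $e^{\sum_{\f} c_{\f}}$ cancels in the ratio) and $\sum_{\f \in \cF_0} c_{\f} \leq 2\,\Ent_\mu(f)/\eta$. Since $\eta/2 < 1/e^3$, \pref{lem:truncate} applied with error parameter $\eta/2$ produces a density $g := \tilde h \in \Delta_X$ that is a nonnegative linear combination of degree-$d$ Riesz $\cF_0$-products with
\[
   d \;\leq\; 6\sum_{\f \in \cF_0} c_{\f} + O\!\left(\frac{\log \frac{2}{\eta}}{\log\log \frac{2}{\eta}}\right) \;\leq\; 12\,\frac{\Ent_\mu(f)}{\eta} + O\!\left(\frac{\log \frac{1}{\eta}}{\log\log \frac{1}{\eta}}\right),
\]
and with $\|g^* - g\|_1 \leq \eta/2$. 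Because $\cF_0 = \cF \cup(-\cF)$, each factor $1 + \e(-\psi)$ equals $1 + (-\e)\psi$ with $-\e \in \{-1,0,1\}$, so every Riesz $\cF_0$-product is a Riesz $\cF$-product of the same degree; thus $g$ has the required structure. Finally, using $\|\psi\|_{\cF} \leq \|\psi\|_1$ for every $\psi$ (which holds since $\sup_{\f}\|\f\|_\infty \leq 1$), the triangle inequality closes the argument:
\[
   \|f - g\|_{\cF} \;\leq\; \|f - g^*\|_{\cF} + \|g^* - g\|_{\cF} \;\leq\; \frac{\eta}{2} + \|g^*-g\|_1 \;\leq\; \eta\,.
\]

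The substantive content of the plan is not any single computation but the bookkeeping: one must (i) replace $\cF$ by $\cF \cup (-\cF)$ to turn the one-sided constraints of the entropy minimization into a bound on the symmetric seminorm, and (ii) split the total budget $\eta$ into a piece $\delta = \eta/2$ controlling how well the entropy-minimizing density tracks $f$ on $\cF$ and a piece $\eta/2$ for the polynomial truncation of the exponential --- this split is exactly where the constant $12$ (rather than $6$) comes from. The only remaining point is to check that the lower-order truncation term, which is genuinely $O\bigl(\log\tfrac{2}{\eta}\big/\log\log\tfrac{2}{\eta}\bigr)$, is absorbed into $O\bigl(\log\tfrac{1}{\eta}\big/\log\log\tfrac{1}{\eta}\bigr)$ for $\eta < 1/e^3$, which is immediate.
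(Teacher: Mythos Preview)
Your proof is correct and follows essentially the same route as the paper: symmetrize $\cF$ to $\cF_0 = \cF \cup (-\cF)$, run the entropy minimization \eqref{eq:primal} with $\delta = \eta/2$, bound $\sum_\f \lambda^*_\f$ via \pref{lem:dual-bound}, and then truncate with \pref{lem:truncate}. You are in fact slightly more careful than the paper in noting explicitly that a Riesz $\cF_0$-product is already a Riesz $\cF$-product (since $1+\e(-\psi)=1+(-\e)\psi$), a point the paper leaves implicit.
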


\begin{proof}
   Consider the optimization \eqref{eq:primal} with $\delta = \eta/2$ and $\cF_0 = \{ \pm \f : \f \in \F \}$.
   Let $(g^*, \{\lambda_{\f}^*\})$ denote the corresponding optimal primal-dual pair and observe that
   \[
      g^* = \frac{\exp\left(\sum_{\f \in \cF_0} \lambda^*_{\f} (1+\f)\right)}{\E_{\mu}\exp\left(\sum_{\f \in \cF_0} \lambda^*_{\f} (1+\f)\right)}\,.
   \]
   Moreover, \pref{lem:dual-bound} asserts that $c = \sum_{\f \in \cF_0} \lambda^*_{\f} \leq 2 \frac{\Ent_{\mu}(f)}{\eta}$.

   Thus we can apply \pref{lem:truncate} to obtain a density $\tilde{h} \in \Delta_X$ that is a non-negative
   linear combination of degree-$d$ Riesz $\cF_0$-products with
   \[
      d \leq 12 \frac{\Ent_{\mu}(f)}{\eta} +O\left(\frac{\log \frac{1}{\eta}}{\log \log \frac{1}{\eta}}\right)\,,
   \]
   and such that $\|\tilde{h}-g^*\|_1 \leq \eta/2$.

   Finally, observe that for any $\f \in \cF$, by definition of the optimization \eqref{eq:primal}, we have
   \[
      |\langle \tilde{h}-f,\f\rangle| \leq |\langle \tilde{h}-g^*,\f\rangle| + |\langle g^*-f,\f\rangle| 
      \leq \|\tilde{h}-g^*\|_1 + \frac{\eta}{2} \leq \eta\,,
   \]
   where in the second inequality we have used $\|\f\|_{\infty} \leq 1$.  It follows that $\|\tilde{h}-f\|_{\cF} \leq \eta$,
   completing the proof.
\end{proof}

\subsection{Mirror descent}
\label{sec:proofs}

We now prove \pref{thm:approx} by giving an algorithm
that approximately solves the optimization \eqref{eq:primal}.
The algorithm and analysis are
based on the ``mirror descent'' framework, analyzed
using a Bregman divergence (in this case, the relative entropy).  See, for instance,
the monograph \cite{Bubeck2014}.
The sparsity of the solution (captured by \eqref{eq:junta-size})
is closely related to sparsity properties of the Frank-Wolfe algorithm \cite{FW56}.

Assume that $\eta > 0$ and $f \in \Delta_X$
are given as in the theorem.
For some value $T > 0$,
define a family $\{g_t : t \in [0,T] \} \subseteq \Delta_X$ by
\begin{equation}\label{eq:gtform}
g_t = \frac{\exp\left(\int_0^t \varphi_s\,ds\right)}{\E_{\mu} \exp\left(\int_0^t \varphi_s\,ds\right)}\,,
\end{equation}
where $s \mapsto \varphi_s \in L^2(\mu)$ is a measurable function
to be specified shortly.  Observe that $g_0 = \mathbf{1}$ is the constant $1$ function.

A simple calculation yields:  For $t \in [0,T)$,
\begin{equation}\label{eq:deriv}
\frac{d}{dt} \D_{\mu}(f\dmid g_t) = \langle \varphi_t, g_t-f\rangle\,.
\end{equation}

We define the maps $s \mapsto \varphi_s$ to be piecewise constant on
a finite sequence of intervals.  Given the definition on
intervals $[0,t_1),[t_1,t_2), \ldots,[t_{i-1},t_i)$ with $0 < t_1 < t_2 < \cdots < t_i$,
we define it on an interval $[t_i,t_{i+1})$ as follows.

If there exists a functional $\varphi \in \cF$ such that
\[
|\langle g_{t_i}, \varphi\rangle - \langle f,\varphi\rangle| > \frac{2\eta}{3}\,,
\]
then we put
\begin{equation}\label{eq:phis}
\varphi_s = \mathrm{sign} \left(\langle f-g_{t_i},\varphi\rangle\right) \cdot \varphi
\end{equation}
for $s \in [t_i, t_{i+1})$
where $t_{i+1} = \inf \{ t \geq t_i : |\langle g_t,\varphi\rangle -\langle f,\varphi\rangle| \leq \eta/3\}$.
We will see momentarily why such a $t_{i+1}$ must exist.

If there is no such functional $\varphi$ at time $t_i$, then we set $T=t_i$ and $i_{\max}=i$.
By construction, we have the property that $\|f-g_T\|_{\cF} \leq \frac23 \eta$.

\begin{lemma}\label{lem:T}
$T \leq 3\frac{\Ent_{\mu}(f)}{\eta}\,.$
\end{lemma}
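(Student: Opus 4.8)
The plan is to use the relative entropy $\D_{\mu}(f \dmid g_t)$ as a potential (Lyapunov) function: I will show it starts at $\Ent_{\mu}(f)$, stays nonnegative, and decreases at rate at least $\eta/3$ along the whole trajectory $[0,T)$, which immediately forces $T \le 3\,\Ent_{\mu}(f)/\eta$.

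First I would pin down the endpoints. Since $g_0 = \mathbf{1}$ is the constant function, $\D_{\mu}(f \dmid g_0) = \E_{\mu}[f \log f] = \Ent_{\mu}(f)$; and since $\D_{\mu}(h \dmid h') \ge 0$ for any two densities, $\D_{\mu}(f \dmid g_T) \ge 0$.

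The core step is to bound the derivative \eqref{eq:deriv} on a generic active interval $[t_i,t_{i+1})$. There $\varphi_s = \sigma_i \varphi$ is the constant function of \eqref{eq:phis} with $\sigma_i = \sign(\langle f - g_{t_i}, \varphi\rangle)$, so \eqref{eq:deriv} reads $\tfrac{d}{dt}\D_{\mu}(f \dmid g_t) = \sigma_i \langle \varphi, g_t - f\rangle = -\sigma_i \langle \varphi, f - g_t\rangle$. At the left endpoint this equals $-|\langle \varphi, f - g_{t_i}\rangle| < -\tfrac{2\eta}{3}$ by the rule for opening the interval. I then need to check that $t \mapsto \langle \varphi, f - g_t\rangle$ does not change sign before $t_{i+1}$: if it vanished at some $t^\star \in (t_i,t_{i+1})$ then $|\langle g_{t^\star},\varphi\rangle - \langle f,\varphi\rangle| = 0 \le \eta/3$, contradicting the definition of $t_{i+1}$ as the first time this quantity drops to $\eta/3$ (using that $g_t$, hence $\langle \varphi, f - g_t\rangle$, is continuous in $t$, which is clear from \eqref{eq:gtform}). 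Consequently, on all of $[t_i,t_{i+1})$ we get $\tfrac{d}{dt}\D_{\mu}(f \dmid g_t) = -|\langle \varphi, f - g_t\rangle| < -\eta/3$. The same observation shows $t_{i+1}$ must be finite: an infinite active interval would keep the derivative below $-\eta/3$ forever and drive $\D_{\mu}(f \dmid g_t)$ to $-\infty$, which is impossible.

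Finally I would integrate. The active intervals $[t_i,t_{i+1})$ tile $[0,T)$, so $\tfrac{d}{dt}\D_{\mu}(f \dmid g_t) \le -\eta/3$ for (almost) every $t \in [0,T)$, and therefore
\[
0 \;\le\; \D_{\mu}(f \dmid g_T) \;\le\; \D_{\mu}(f \dmid g_0) - \tfrac{\eta}{3} T \;=\; \Ent_{\mu}(f) - \tfrac{\eta}{3} T,
\]
which rearranges to the claimed bound. I expect the only delicate point to be the sign/continuity bookkeeping in the third step — and, if one wants full rigor in the case where infinitely many active intervals accumulate at $T$, verifying that $t \mapsto \D_{\mu}(f \dmid g_t)$ is absolutely continuous on $[0,T)$ so that the fundamental theorem of calculus still applies.
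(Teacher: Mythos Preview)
Your proof is correct and follows essentially the same approach as the paper's: use $\D_{\mu}(f\dmid g_t)$ as a potential, show its derivative is at most $-\eta/3$ on $[0,T)$ via \eqref{eq:deriv} and the stopping rule, and integrate against the initial value $\Ent_{\mu}(f)$ and nonnegativity. You have simply filled in the sign-preservation and finiteness-of-$t_{i+1}$ details that the paper leaves implicit.
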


\begin{proof}
Simply observe that for $t \in [0,T)$, the calculation \eqref{eq:deriv} combined
with the definition of the sequence $\{t_i\}$ and the choice \eqref{eq:phis} yields
\[
\frac{d}{dt} \D_{\mu}(f\dmid g_t) \leq -\frac{\eta}{3}\,.
\]
On the other hand, $\D_{\mu}(f\dmid g_0) = \Ent_{\mu}(f)$ and $\D_{\mu}(f \dmid g_t) \geq 0$ is always true.
This yields the claim.
\end{proof}

\begin{lemma}\label{lem:i}
It holds that $i_{\max} \leq 9\frac{\Ent_{\mu}(f)}{\eta^2}$.
\end{lemma}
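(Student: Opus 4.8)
The plan is to combine \pref{lem:T} with a uniform lower bound on the lengths of the intervals $[t_i,t_{i+1})$. Writing the successive intervals as $[0,t_1),[t_1,t_2),\ldots,[t_{i_{\max}-1},t_{i_{\max}})$ with $t_{i_{\max}}=T$, we have $T=\sum_{i=0}^{i_{\max}-1}(t_{i+1}-t_i)$, so it suffices to prove that $t_{i+1}-t_i\ge\eta/3$ for every $i$; together with \pref{lem:T} this gives $i_{\max}\cdot\tfrac{\eta}{3}\le T\le 3\,\tfrac{\Ent_{\mu}(f)}{\eta}$, which is the claim.

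To lower bound $t_{i+1}-t_i$, I would first record the elementary ``Gibbs derivative'' identity obtained by differentiating \eqref{eq:gtform}: for $t$ in the interior of any interval,
\[
\frac{d}{dt}g_t = g_t\bigl(\varphi_t-\langle g_t,\varphi_t\rangle\bigr)\,,
\]
and hence, for every fixed $\psi\in L^2(\mu)$,
\[
\frac{d}{dt}\langle g_t,\psi\rangle = \langle g_t\varphi_t,\psi\rangle-\langle g_t,\varphi_t\rangle\langle g_t,\psi\rangle = \cov_{g_t}(\varphi_t,\psi)\,,
\]
the covariance being taken under the probability measure with density $g_t$ relative to $\mu$. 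On the interval $[t_i,t_{i+1})$ the construction \eqref{eq:phis} fixes $\varphi_s\equiv\sigma\varphi$ for a single $\varphi\in\cF$ and a sign $\sigma\in\{-1,1\}$, so taking $\psi=\varphi$ yields $\frac{d}{dt}\langle g_t,\varphi\rangle=\sigma\,\var_{g_t}(\varphi)$; since $\sup_{\varphi\in\cF}\|\varphi\|_{\infty}\le 1$ and $g_t\in\Delta_X$, this is bounded in absolute value by
\[
\var_{g_t}(\varphi)\le\langle g_t,\varphi^2\rangle\le\langle g_t,\mathbf{1}\rangle=1\,.
\]

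The rest is a continuity argument. Since $t\mapsto\langle g_t,\varphi\rangle$ is continuous and $\langle f,\varphi\rangle$ is constant in $t$, the function $t\mapsto|\langle g_t,\varphi\rangle-\langle f,\varphi\rangle|$ exceeds $\tfrac{2\eta}{3}$ at $t=t_i$ and equals $\tfrac{\eta}{3}$ at $t=t_{i+1}$ (the infimum defining $t_{i+1}$ is attained). By the reverse triangle inequality $\langle g_t,\varphi\rangle$ therefore changes by more than $\tfrac{2\eta}{3}-\tfrac{\eta}{3}=\tfrac{\eta}{3}$ over $[t_i,t_{i+1}]$, and comparing this with the rate bound $\bigl|\tfrac{d}{dt}\langle g_t,\varphi\rangle\bigr|\le 1$ gives $t_{i+1}-t_i>\eta/3$, as needed. (The same derivative identity, applied instead via \eqref{eq:deriv} to $\frac{d}{dt}\D_{\mu}(f\dmid g_t)=\langle\varphi_t,g_t-f\rangle\le-\tfrac{\eta}{3}$ while the stopping condition is unmet, together with $\D_{\mu}(f\dmid g_t)\ge 0$, also shows that $t_{i+1}$ is finite --- the point deferred in the construction.)

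The only substantive input is the variance estimate $\var_{g_t}(\varphi)\le 1$, which is exactly where the $\ell_\infty$ normalization of $\cF$ enters; the remaining items --- that $t\mapsto g_t$ is $C^1$ on each open interval, continuous across the breakpoints $t_i$, and that the stopping constraint is active at $t_{i+1}$ --- are routine. I do not anticipate a genuine obstacle; structurally this is the familiar ``each iteration makes a fixed minimum amount of progress'' counting argument from the analysis of the Frank--Wolfe method.
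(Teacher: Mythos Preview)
Your proposal is correct and follows essentially the same approach as the paper: both arguments differentiate $t\mapsto\langle g_t,\varphi\rangle$ on each interval, observe that the derivative is (up to sign) $\var_{g_t}(\varphi)\le 1$ by the $\ell_\infty$ normalization of $\cF$, deduce that each interval has length at least $\eta/3$, and combine with \pref{lem:T}. Your write-up is slightly more explicit about the continuity/stopping argument and the finiteness of $t_{i+1}$, but the substance is identical.
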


\begin{proof}
Fix an interval $[t_{i-1}, t_i)$ with $i \leq i_{\max}$.
Let $\varphi = \varphi_{t_{i-1}}$.  We calculate
\[
\frac{d}{dt} \langle \varphi,g_t\rangle = - \langle \varphi, g_t(\varphi-\langle \varphi,g_t\rangle)\rangle
= - \langle \varphi^2,g_t\rangle + \langle \varphi,g_t\rangle^2\,.
\]
Notice that the latter quantity is at least $-\|\varphi\|_{\infty}^2 \|g_t\|_1 \geq -1$.
Therefore $t_i-t_{i-1} \geq \frac{\eta}{3}$.  We conclude
that $i_{\max} \leq 3T/\eta$ and combine this with \pref{lem:T}.
\end{proof}

Observe
now that
\begin{equation}\label{eq:gT}
   g_T = \frac{\exp\left(\int_0^T (1+\varphi_s)\,ds\right)}{\E_{\mu} \exp\left(\int_0^T (1+\varphi_s)\,ds\right)}
\end{equation}
and $\|f-g_T\|_{\cF} \leq 2\eta/3$.

Note that if we set
\[
   \cF' = \left\{ \f \in \cF : \f = \pm \f_t \textrm{ for some } t \in [0,T] \right\}\,,
\]
then \pref{lem:i} yields $|\cF'| \leq 9 \frac{\Ent_{\mu}(f)}{\eta}$.
The proof of \pref{thm:approx}
is concluded using \pref{lem:truncate} in conjunction with \pref{lem:T},
just as in the proof of \pref{thm:approx2}.

\section{Covering the large spectrum}
\label{sec:large-spectrum}

Let $G$ be a finite abelian group equipped
with the uniform measure $\mu$, and let $\hat G$ be the dual group.
Let $0$ denote the identity element in $G$ and $\hat G$.

For $\gamma \in \hat G$, let $u_{\gamma} : G \to \mathbb C$
denote the corresponding character.   One can write any $f:G \to \mathbb C$
as $f = \sum_{\gamma \in \hat G} \hat f(\gamma) u_{\gamma}$.
We will need the properties that $u_{\gamma} u_{\gamma'} = u_{\gamma+\gamma'}$ for
all $\gamma,\gamma' \in \hat G$ and
$\max_{x \in G} |u_{\gamma}(x)| \leq 1$.
One may consult \cite[Ch. 4]{TaoVu10} for a treatment
of discrete Fourier analysis
tailored to applications in additive combinatorics.

For each value $\delta > 0$, we define the set
\[
\Spec_{\delta}(f) = \{ \gamma \in \hat G : |\hat f(\gamma)| > \delta\}\,.
\]
Say that a subset $S \subseteq \hat G$ is {\em covered} by a subset $\Lambda \subseteq \hat{G}$ if
\[
S \subseteq \left\{ \sum_{\lambda \in \Lambda} \epsilon_{\lambda} \lambda : \e_{\lambda} \in \{-1,0,1\}\right\}\,.
\]
A subset $S \subseteq \hat G$ is
{\em $d$-covered} if there exists a subset $\Lambda \subseteq \hat G$
with $|\Lambda|\leq d$ that covers $S$.

Let us define the family
\[
\ccF = \left\{ \mathrm{Re}\,u_{\gamma}, \mathrm{Im}\,u_{\gamma} : \gamma \in \hat G \right\} \subseteq L^2(\mu)\,.
\]
Note that $\|\varphi\|_{\infty} \leq 1$ for every $\varphi \in \mathcal F$.

\begin{lemma}\label{lem:basic}
If $R$ is a degree-$d$ Riesz $\ccF$-product, then $\Spec_0(R) = \{\gamma \in \hat G : \hat R(\gamma) \neq 0\}$ is $d$-covered.
\end{lemma}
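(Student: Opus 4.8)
The plan is to track the Fourier support of a Riesz $\ccF$-product through its product structure. First I would recall that every $\varphi \in \ccF$ has the form $\Re u_{\gamma}$ or $\Im u_{\gamma}$ for some $\gamma \in \hat G$, and that the characters of a finite abelian group are unimodular, so $\overline{u_{\gamma}} = u_{-\gamma}$; hence $\Re u_{\gamma} = \tfrac12(u_{\gamma} + u_{-\gamma})$ and $\Im u_{\gamma} = \tfrac1{2i}(u_{\gamma} - u_{-\gamma})$. Writing $R(x) = \prod_{i=1}^{d}(1 + \e_i \varphi_i(x))$ with $\varphi_i \in \ccF$ and $\e_i \in \{-1,0,1\}$, and letting $\gamma_i \in \hat G$ be the frequency attached to $\varphi_i$ (taking $\gamma_i$ arbitrary, say $\gamma_i = 0$, when $\e_i = 0$), each factor $1 + \e_i\varphi_i$ is a complex linear combination of the three characters $u_0, u_{\gamma_i}, u_{-\gamma_i}$. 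In particular $\{\gamma : \widehat{(1+\e_i\varphi_i)}(\gamma) \neq 0\} \subseteq \{0, \gamma_i, -\gamma_i\}$.

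Next I would expand the product of the $d$ factors and apply multiplicativity of characters, $u_{\alpha} u_{\beta} = u_{\alpha+\beta}$: the product is a linear combination of characters $u_{\nu}$ with $\nu$ ranging over the sumset $\{0,\gamma_1,-\gamma_1\} + \cdots + \{0,\gamma_d,-\gamma_d\}$. Every element of this sumset is of the form $\sum_{i=1}^{d} \epsilon_i \gamma_i$ with $\epsilon_i \in \{-1,0,1\}$. Therefore, setting $\Lambda = \{\gamma_1,\ldots,\gamma_d\} \subseteq \hat G$, we obtain $\Spec_0(R) \subseteq \{\sum_{\lambda \in \Lambda} \epsilon_{\lambda}\lambda : \epsilon_{\lambda} \in \{-1,0,1\}\}$, i.e.\ $\Lambda$ covers $\Spec_0(R)$. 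Since $|\Lambda| \leq d$, this shows $\Spec_0(R)$ is $d$-covered, as claimed.

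I do not expect any real obstacle here; the argument is a direct bookkeeping of frequencies. The only points needing a moment of care are the identity $\overline{u_{\gamma}} = u_{-\gamma}$ (which relies on unimodularity of characters) and the degenerate case $\e_i = 0$ (where the factor is simply $u_0$, contributing only $\{0\}$, consistent with taking $\epsilon_i = 0$), both handled above. It is also worth noting that although $R$ is real-valued, its expansion is over the complex characters $u_{\gamma}$, so $\Spec_0(R) = \{\gamma : \hat R(\gamma) \neq 0\}$ is exactly the object the containment bounds.
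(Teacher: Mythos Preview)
Your argument has a genuine gap at the last step. You correctly observe that every frequency in $\Spec_0(R)$ lies in the sumset $\{0,\gamma_1,-\gamma_1\}+\cdots+\{0,\gamma_d,-\gamma_d\}$, i.e.\ is of the form $\sum_{i=1}^d \epsilon_i\gamma_i$ with $\epsilon_i\in\{-1,0,1\}$. But you then set $\Lambda=\{\gamma_1,\dots,\gamma_d\}$ as a \emph{set} and claim this equals $\{\sum_{\lambda\in\Lambda}\epsilon_\lambda\lambda:\epsilon_\lambda\in\{-1,0,1\}\}$. The definition of ``covered by $\Lambda$'' in the paper sums once over each \emph{element} of the set $\Lambda$, not once per index $i$. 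Nothing in the definition of a Riesz $\ccF$-product forces the $\varphi_i$ (and hence the $\gamma_i$) to be distinct, so if some $\gamma$ is repeated among the $\gamma_i$'s your $\Lambda$ collapses and no longer covers the spectrum. Concretely, take $d=2$ and $R=(1+\Re u_\gamma)^2$; then $(\Re u_\gamma)^2=\tfrac12(1+\Re u_{2\gamma})$, so $2\gamma\in\Spec_0(R)$, yet your $\Lambda=\{\gamma\}$ only covers $\{-\gamma,0,\gamma\}$.

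The paper handles exactly this issue: after reaching the same multiset conclusion you did, it explicitly replaces repeated frequencies. If $\gamma$ occurs $t$ times among the $\gamma_i$, the $\pm1,0$ combinations of those $t$ copies produce all multiples $k\gamma$ with $|k|\le t$, and each such $k\gamma$ is $\pm1$ times a single element of $\{\gamma,2\gamma,\dots,t\gamma\}$; so one substitutes these $t$ elements in place of the $t$ repeated copies, yielding an honest set of size at most $d$ that still covers $\Spec_0(R)$. Your proof becomes correct once you insert this replacement step.
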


\begin{proof}
Write $R=\prod_{i=1}^d (1+\e_i \varphi_i)$ for $\{\varphi_i\} \subseteq \mathcal F$ and $\{\e_i\} \subseteq \{-1,0,1\}$.
For each $i$, let $\gamma_i \in \hat G$ be such that $\varphi_i = \Re u_{\gamma_i}$ or $\varphi_i = \Im u_{\gamma_i}$.
Since we can write $\mathrm{Re}\,u_{\gamma} = \frac12(u_{\gamma}+u_{-\gamma})$ and
$\mathrm{Im}\,u_{\gamma}=\frac{1}{2i}(u_{\gamma}-u_{-\gamma})$,
upon expanding the product defining $R$, we see that every
$\gamma \in \hat G$ with $\hat R(\gamma) \neq 0$ is a sum of
at most $d$ elements from the {\em multiset} $\Gamma_0 \seteq \{\gamma_1, \ldots, \gamma_d, -\gamma_1, \ldots, -\gamma_d\} \subseteq \hat G$.
(We are using the convention here that the empty sum is equal to the identity of $\hat G$
in order to handle $\hat R(0) \neq 0$.)
But we can replace $\Gamma_0$ by an actual set $\Gamma \subseteq \hat{G}$ as follows:
For each $i=1,\ldots,d$, 
if $\gamma_i$ occurs $t$ times in $\Gamma_0$, we replace the $t$ occurrences of $\pm \gamma_i$ by the elements
$\{\pm \gamma_i, \pm 2 \gamma_i, \cdots, \pm t \gamma_i\}$.
\end{proof}

\subsection{Bloom's theorem}


Recall that $\Delta_G = \left\{ f : G \to [0,\infty) : \E_{\mu} f = 1\right\}$.


\begin{theorem}[Bloom]
   \label{thm:bloom}
For every $f \in \Delta_G$ and $0 < \delta < \frac{1}{e^3}$, there exists
a subset $S \subseteq \Spec_{\delta}(f)$ such that $|S| \geq \frac{\delta}{2} |\Spec_{\delta}(f)|$ and $S$ is $d$-covered
for
\[
d \leq 24\sqrt{2} \frac{\Ent_{\mu}(f)}{\delta} + O\left(\frac{\log \frac{1}{\delta}}{\log \log \frac{1}{\delta}}\right)\,.
\]
\end{theorem}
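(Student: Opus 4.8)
The plan is to derive \pref{thm:bloom} from the low-degree approximation theorem \pref{thm:approx2} together with \pref{lem:basic}, using a one-line averaging argument to extract a single Riesz product whose spectrum already contains a large piece of $\Spec_\delta(f)$.

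First I would apply \pref{thm:approx2} to $f$ with the choice $\eta = \delta/(2\sqrt 2)$; this is admissible since $\delta < 1/e^3$ forces $\eta < 1/e^3$. This yields $g \in \Delta_G$ with $\|f-g\|_{\ccF} \le \eta$ that is a non-negative combination $g = \sum_k a_k R_k$ of degree-$d$ Riesz $\ccF$-products, where
\[
d \le 12\,\frac{\Ent_\mu(f)}{\eta} + O\!\left(\frac{\log\frac1\eta}{\log\log\frac1\eta}\right) = 24\sqrt 2\,\frac{\Ent_\mu(f)}{\delta} + O\!\left(\frac{\log\frac1\delta}{\log\log\frac1\delta}\right),
\]
the last equality absorbing $\log(2\sqrt2)$ into the error term. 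The point of working with $\ccF = \{\Re u_\gamma,\Im u_\gamma : \gamma\in\hat G\}$ is that, since $f$ and $g$ are real, $\hat f(\gamma)-\hat g(\gamma) = \langle \Re u_\gamma, f-g\rangle - i\langle\Im u_\gamma,f-g\rangle$, so $|\hat f(\gamma)-\hat g(\gamma)|^2 = \langle\Re u_\gamma,f-g\rangle^2 + \langle\Im u_\gamma,f-g\rangle^2 \le 2\|f-g\|_{\ccF}^2 \le 2\eta^2$. Hence for every $\gamma\in\Spec_\delta(f)$ we get $|\hat g(\gamma)| \ge |\hat f(\gamma)| - \sqrt2\,\eta > \delta - \tfrac\delta2 = \tfrac\delta2$; that is, $\Spec_\delta(f) \subseteq \Spec_{\delta/2}(g)$.

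The heart of the argument, and the step I expect to require the most care, is the averaging. Put $b_k = a_k\,\E_\mu R_k$; each $b_k \ge 0$ because $R_k \ge 0$, and $\sum_k b_k = \E_\mu g = 1$, so $\{b_k\}$ is a probability distribution (after discarding the harmless indices with $\E_\mu R_k = 0$, which forces $R_k \equiv 0$ since $\mu$ has full support). Non-negativity of $R_k$ also gives $|\hat R_k(\gamma)| = |\E_\mu[R_k\overline{u_\gamma}]| \le \E_\mu R_k$, and of course $\hat R_k(\gamma) = 0$ unless $\gamma\in\Spec_0(R_k)$. Therefore, for any $\gamma\in\Spec_\delta(f)$,
\[
\tfrac\delta2 < |\hat g(\gamma)| \le \sum_k a_k\,|\hat R_k(\gamma)| \le \sum_{k\,:\,\gamma\in\Spec_0(R_k)} a_k\,\E_\mu R_k = \Pr_{k\sim b}\big[\gamma\in\Spec_0(R_k)\big].
\]
Summing over $\gamma\in\Spec_\delta(f)$ turns this into $\E_{k\sim b}\big[\,|\Spec_\delta(f)\cap\Spec_0(R_k)|\,\big] > \tfrac\delta2\,|\Spec_\delta(f)|$, so some index $k^\star$ satisfies $|\Spec_\delta(f)\cap\Spec_0(R_{k^\star})| \ge \tfrac\delta2\,|\Spec_\delta(f)|$.

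To finish, take $S = \Spec_\delta(f)\cap\Spec_0(R_{k^\star})$. Then $S\subseteq\Spec_\delta(f)$ and $|S| \ge \tfrac\delta2|\Spec_\delta(f)|$ by the previous line, while $S\subseteq\Spec_0(R_{k^\star})$ and $R_{k^\star}$ is a degree-$d$ Riesz $\ccF$-product, so \pref{lem:basic} shows $\Spec_0(R_{k^\star})$ — hence its subset $S$ — is $d$-covered with $d$ as bounded above. (The degenerate case $\Spec_\delta(f)=\emptyset$ is trivial, taking $S=\emptyset$.) The only remaining routine verifications are the admissibility of the choice of $\eta$, the elementary Fourier identities used above, and the monotonicity of ``$d$-covered'' under passing to subsets.
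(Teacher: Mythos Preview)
Your proof is correct and follows essentially the same route as the paper: apply the approximation theorem with $\eta=\delta/(2\sqrt 2)$, pass from $\Spec_\delta(f)$ to $\Spec_{\delta/2}(g)$ via the $\sqrt 2$ loss, then average over the Riesz products with weights $a_k\,\E_\mu R_k$ and pick one whose spectrum captures a $\delta/2$ fraction. The only cosmetic difference is that you invoke \pref{thm:approx2} rather than \pref{thm:approx}; since the proof never uses the sparsity bound \eqref{eq:junta-size}, this is harmless and arguably the cleaner citation.
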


\begin{proof}
Setting $\eta = \delta/(2\sqrt{2})$ and applying
\pref{thm:approx}, there exists a $g \in \Delta_G$ such that
\[
g = \sum_{i=1}^N c_i R_i
\]
with $N \geq 1$, $c_1, \ldots, c_N > 0$, and where $R_1, \ldots, R_N$
are degree-$d$ Riesz $\ccF$-products for $d$ as in \eqref{eq:deg}
and furthermore $\|f-g\|_{\mathcal F} \leq \eta\,.$

\medskip

Observe that since $g \in \Delta_G$, we have $\sum_{i=1}^N c_i \mathbb E_{\mu} R_i = \E_{\mu} g = 1$.
Thus we can define a random variable $Z \in \{1,2,\ldots,N\}$ so that
\[
\Pr[Z=i] = c_i \mathbb E_{\mu} R_i\,.
\]

Since $\|f-g\|_{\cF} \leq \eta$, we deduce that
if $\gamma \in \Spec_{2 \sqrt{2} \eta}(f)$, then $\gamma \in \Spec_{\sqrt{2} \eta}(g)$.
For such $\gamma$, we have
\[
\E_z\left[\left|\langle u_{\gamma}, \tfrac{R_z}{\E_{\mu} R_z}\rangle\right|\right] =\sum_{i=1}^N c_i (\mathbb E_{\mu} R_i) \left|\left\langle u_{\gamma}, \tfrac{R_i}{\E_{\mu} R_i}\right\rangle\right|
\geq |\langle u_{\gamma},g\rangle| \geq \sqrt{2}\eta = \frac{\delta}{2}\,.
\]

Because $\left|\left\langle u_{\gamma}, \tfrac{R_i}{\E_{\mu} R_i}\right\rangle\right| \leq 1$,
we conclude that
\begin{equation*}
\label{eq:yup}
\Pr_z\left(\hat R_z(\gamma) \neq 0\right) = \Pr_z\left(|\langle u_{\gamma}, R_z\rangle| > 0\right) \geq \frac{\delta}{2}\,.
\end{equation*}
By linearity, $\E_z |\Spec_0(R_z)|  \geq \frac{\delta}{2} \left|\Spec_{\delta}(f)\right|.$
Moreover, by \pref{lem:basic}, every set $\Spec_0(R_i)$ is $d$-covered.  Thus
there exists at least one such set that completes the proof of the theorem.
\end{proof}

\subsection{Chang's theorem}

\begin{theorem}[Chang]\label{thm:chang}
   For every $f \in \Delta_G$ and $\delta > 0$, the set $\Spec_{\delta}(f)$ is $d$-covered for
   \[
      d \leq 4 \frac{\Ent_{\mu}(f)}{\delta^2}\,.
   \]
\end{theorem}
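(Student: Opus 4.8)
The plan is to reduce the theorem to a size bound for a maximal dissociated subset of $\Spec_\delta(f)$ and then feed that subset into \pref{lem:laplace}. Recall that $\Lambda \subseteq \hat G$ is \emph{dissociated} if $\sum_{\gamma \in \Lambda} \epsilon_\gamma \gamma = 0$ with each $\epsilon_\gamma \in \{-1,0,1\}$ forces all $\epsilon_\gamma = 0$; in particular $0 \notin \Lambda$. Fix a maximal dissociated subset $\Lambda \subseteq \Spec_\delta(f)$, which exists since $\hat G$ is finite. For every $\gamma \in \Spec_\delta(f) \setminus \Lambda$ with $\gamma \neq 0$, maximality produces a non-trivial $\{-1,0,1\}$-relation on $\Lambda \cup \{\gamma\}$; since $\Lambda$ supports no such relation, the coefficient of $\gamma$ is $\pm 1$, so $\gamma$ lies in the $\{-1,0,1\}$-span of $\Lambda$. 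Using the empty-sum convention for the identity (exactly as in \pref{lem:basic}), this shows $\Spec_\delta(f)$ is covered by $\Lambda$, hence is $|\Lambda|$-covered. It therefore suffices to prove $|\Lambda| \leq 4\,\Ent_\mu(f)/\delta^2$, and we may assume $0 < \delta < 1$, since otherwise $\Spec_\delta(f) = \emptyset$ (as $|\hat f(\gamma)| \leq \|f\|_1 = 1$ for all $\gamma$).

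To bound $|\Lambda|$ I would pass to the real family $\ccF_\Lambda = \{\Re u_\gamma, \Im u_\gamma : \gamma \in \Lambda\} \subseteq L^2(\mu)$. Since $f$ is real-valued, $\hat f(\gamma) = \E_\mu[f \bar u_\gamma] = \langle f, \Re u_\gamma\rangle - i\langle f, \Im u_\gamma\rangle$, so
\[
   \sum_{\varphi \in \ccF_\Lambda} \langle f, \varphi\rangle^2 \;=\; \sum_{\gamma \in \Lambda} |\hat f(\gamma)|^2 \;>\; \delta^2\,|\Lambda|\,,
\]
the last step because $|\hat f(\gamma)| > \delta$ for each $\gamma \in \Lambda$. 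Consequently, once we know that $\ccF_\Lambda$ is Laplace pseudorandom, \pref{lem:laplace} applied with $\ccF_\Lambda$ in place of $\cF$ gives $\delta^2|\Lambda| < 2\,\Ent_\mu(f)$, which is stronger than the claimed bound.

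The heart of the matter is thus the claim that \emph{a dissociated family of characters is Laplace pseudorandom}; this is the one place where the group structure of $\hat G$ enters, and it is a clean instance of Rudin's inequality. Given reals $\{b_\gamma, c_\gamma\}_{\gamma \in \Lambda}$, write $w_\gamma = b_\gamma + i c_\gamma$ and $Y_\gamma = b_\gamma \Re u_\gamma + c_\gamma \Im u_\gamma = \Re(\bar w_\gamma u_\gamma)$, so $|Y_\gamma(x)| \leq |w_\gamma|$ pointwise. Bounding $e^t$ on $[-|w_\gamma|,|w_\gamma|]$ by its chord gives $e^{Y_\gamma} \leq \cosh(|w_\gamma|)\bigl(1 + \tanh(|w_\gamma|)\, Y_\gamma / |w_\gamma|\bigr)$, where $Y_\gamma/|w_\gamma| = \Re(\bar\zeta_\gamma u_\gamma)$ for a unimodular $\zeta_\gamma$. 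Multiplying over $\gamma$ and integrating, the resulting non-negative (generalized) Riesz product $\prod_{\gamma}\bigl(1 + \tanh(|w_\gamma|)\Re(\bar\zeta_\gamma u_\gamma)\bigr)$ expands — just as in the proof of \pref{lem:basic} — into characters $u_{\sum_\gamma \sigma_\gamma \gamma}$ with $\sigma_\gamma \in \{-1,0,1\}$, so dissociativity of $\Lambda$ forces every term other than the constant one to integrate to $0$, and its $\mu$-mean is exactly $1$. This leaves
\[
   \E_\mu \exp\!\Big(\sum_{\gamma \in \Lambda}(b_\gamma \Re u_\gamma + c_\gamma \Im u_\gamma)\Big) \;\leq\; \prod_{\gamma \in \Lambda}\cosh(|w_\gamma|) \;\leq\; \exp\!\Big(\tfrac12 \sum_{\gamma \in \Lambda}(b_\gamma^2 + c_\gamma^2)\Big)\,,
\]
which is exactly \eqref{eq:laplace} for $\ccF_\Lambda$. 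Combined with the two reductions above, this completes the proof.

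I expect the only genuinely delicate point to be the verification that this generalized Riesz product integrates to $1$, i.e.\ that all cross terms vanish: this is precisely where $\Lambda$ being dissociated (rather than merely a set) is essential, and it needs a small amount of care for characters of order $2$ or $3$. The reduction to a maximal dissociated subset and the invocation of \pref{lem:laplace} are routine.
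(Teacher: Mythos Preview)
Your argument is correct and follows essentially the same route as the paper: reduce to a maximal dissociated subset $\Lambda \subseteq \Spec_\delta(f)$, then bound $|\Lambda|$ by establishing Laplace pseudorandomness of the associated character family via the convexity/Rudin chord bound and invoking \pref{lem:laplace}. The only difference is that the paper treats $\{\Re u_\gamma : \gamma \in \Lambda\}$ and $\{\Im u_\gamma : \gamma \in \Lambda\}$ as two \emph{separate} Laplace-pseudorandom families and applies \pref{lem:laplace} twice (giving the constant $4$), whereas your joint treatment via $b_\gamma \Re u_\gamma + c_\gamma \Im u_\gamma = \Re(\bar w_\gamma u_\gamma)$ shows the combined family is Laplace pseudorandom in one shot and yields the sharper constant $2$; your worry about order-$2$ or order-$3$ characters is unnecessary, since the expansion only ever produces exponents $\sigma_\gamma \in \{-1,0,1\}$ and dissociativity handles these directly.
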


Note that \pref{thm:approx} implies 
there is a density $g \in \Delta_G$ such that $\Spec_{\delta}(f) \subseteq \Spec_0(g)$
and from \eqref{eq:junta-size}, one can write
$g(x) = \psi(u_{\gamma_1}(x), \ldots, u_{\gamma_k}(x))$ for some function $\psi$
and $\gamma_1, \ldots, \gamma_k \in \hat{G}$ with $k \leq O(\Ent_{\mu}(f)/\delta^2)$.
In the special case $G=\mathbb{F}_2^n$, this implies that
\[
   \Spec_0(g) \subseteq \mathrm{span}_{\mathbb{F}_2}(\gamma_1, \ldots, \gamma_k) = \left\{ \sum_{i=1}^k \e_i \gamma_i : \e_i \in \{-1,0,1\} \right\}\,,
\]
yielding \pref{thm:chang} for $G=\mathbb{F}_2^n$.
For general finite abelian $G$, this no longer holds, and one obtains instead the following statement.

\begin{lemma}
   For every $f \in \Delta_G$ and $0< \delta < \frac{1}{e^3}$, there is a set $\Lambda \subseteq \hat{G}$ with
   \[
      |\Lambda| \leq 18 \frac{\Ent_{\mu}(f)}{\delta^2}
   \]
   and such that every element $\gamma \in \Spec_{\delta}(f)$ can be written
   \[
      \gamma = \sum_{i=1}^d \e_i \gamma_i\,,\qquad (\gamma_1, \ldots,\gamma_d)\in \Lambda^d, \e_1, \ldots,\e_d \in \{-1,0,1\}\,.
   \]
   with
   \[
      d \leq 12\sqrt{2} \frac{\Ent_{\mu}(f)}{\delta} + O\left(\frac{\log \frac{1}{\delta}}{\log \log \frac{1}{\delta}}\right)\,.
   \]
\end{lemma}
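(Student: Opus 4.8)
The plan is essentially to specialize \pref{thm:approx} to the family $\ccF$ of character components and read off $\Lambda$ from the sub-family it returns, in the same spirit as the proof of \pref{thm:bloom}. The one conceptual difference is that we now want to capture \emph{all} of $\Spec_\delta(f)$, not just a $\delta/2$-fraction of it; this is harmless here because we are allowed a covering set $\Lambda$ of size $\sim \Ent_\mu(f)/\delta^2$ rather than insisting on a single $d$-element set, and the inclusion $\Spec_0(g) \subseteq \bigcup_i \Spec_0(R_i)$, which is all we need, follows at once from $\hat g = \sum_i c_i \hat R_i$.

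First I would set $\eta = \delta/\sqrt2$, which is legitimate since $\delta < e^{-3}$ forces $0 < \eta < e^{-3}$, and apply \pref{thm:approx} to $f$. This hands me a density $g = \sum_{i=1}^N c_i R_i \in \Delta_G$ with $c_i > 0$, each $R_i$ a degree-$d$ Riesz $\cF'$-product over a subfamily $\cF' \subseteq \ccF$ with
\[
   |\cF'| \;\le\; 9\,\frac{\Ent_\mu(f)}{\eta^2} \;=\; 18\,\frac{\Ent_\mu(f)}{\delta^2}\,,
\]
and, via \eqref{eq:deg} together with $\log\tfrac1\eta = \log\tfrac1\delta + O(1)$, with $d \le 12\sqrt2\,\Ent_\mu(f)/\delta + O\!\big(\log\tfrac1\delta/\log\log\tfrac1\delta\big)$, and with $\|f-g\|_{\ccF} \le \eta$. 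I would then pick, for each $\varphi \in \cF'$, one $\gamma \in \hat G$ with $\varphi \in \{\Re u_\gamma, \Im u_\gamma\}$, and let $\Lambda$ be the set of these $\gamma$, so $|\Lambda| \le |\cF'| \le 18\,\Ent_\mu(f)/\delta^2$; there is no need to also throw in $-\gamma$, since the allowed exponents include $-1$.

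Next I would check $\Spec_\delta(f) \subseteq \Spec_0(g)$: writing $\bar u_\gamma = \Re u_\gamma - i\,\Im u_\gamma$ gives $|\hat f(\gamma) - \hat g(\gamma)|^2 = \langle f-g, \Re u_\gamma\rangle^2 + \langle f-g, \Im u_\gamma\rangle^2 \le 2\eta^2 = \delta^2$, so $|\hat f(\gamma)| > \delta$ forces $|\hat g(\gamma)| > 0$. Then, for any fixed $\gamma \in \Spec_\delta(f)$, from $\hat g(\gamma) = \sum_i c_i \hat R_i(\gamma) \neq 0$ I get some $i$ with $\hat R_i(\gamma) \neq 0$; expanding $R_i = \prod_{j=1}^d (1+\e_j \varphi_j)$ and substituting $\Re u_{\gamma'} = \tfrac12(u_{\gamma'}+u_{-\gamma'})$, $\Im u_{\gamma'} = \tfrac1{2i}(u_{\gamma'}-u_{-\gamma'})$ exactly as in the proof of \pref{lem:basic} shows that $u_\gamma$ is a product of at most $d$ characters $u_{\pm\gamma_j}$ with $\gamma_j \in \Lambda$, hence $\gamma = \sum_{j=1}^d \e_j \gamma_j$ with $(\gamma_1,\dots,\gamma_d) \in \Lambda^d$ and $\e_j \in \{-1,0,1\}$ (padding with zero coefficients). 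Note that, in contrast with \pref{lem:basic}, there is no multiset-to-set surgery to perform: the statement already permits repeated atoms in the tuple $(\gamma_1,\dots,\gamma_d)$, which is exactly what keeps $|\Lambda|$ down to $|\cF'|$.

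I do not anticipate a real obstacle, only two bookkeeping points. The delicate one is the choice of $\eta$: it must be small enough that the $\sqrt2\,\eta$ perturbation of Fourier coefficients cannot drag a coefficient of magnitude exceeding $\delta$ down to $0$ (forcing $\sqrt2\,\eta \le \delta$), yet large enough for $9/\eta^2$ to land on the advertised $18/\delta^2$; the value $\eta = \delta/\sqrt2$ does both, and this borderline equality is admissible precisely because $\Spec_\delta$ is defined with a strict inequality. The second point is the observation, already made above, that the surgery of \pref{lem:basic} is unnecessary here, so that $|\Lambda|$ inherits the bound on $|\cF'|$ with no extra factor.
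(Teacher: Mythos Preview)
Your proposal is correct and follows essentially the same route as the paper's own proof: set $\eta=\delta/\sqrt{2}$, apply \pref{thm:approx}, read $\Lambda$ off from the subfamily $\cF'$, use $\|f-g\|_{\cF}\le\eta$ to obtain $\Spec_{\delta}(f)\subseteq\Spec_0(g)\subseteq\bigcup_i\Spec_0(R_i)$, and expand each $R_i$ as in \pref{lem:basic}. Your write-up is in fact more careful than the paper's sketch---you make explicit the $|\hat f(\gamma)-\hat g(\gamma)|\le\sqrt{2}\,\eta$ computation, the role of the strict inequality in the definition of $\Spec_\delta$, and the observation that allowing repeated entries in the tuple $(\gamma_1,\ldots,\gamma_d)\in\Lambda^d$ obviates the multiset-to-set surgery of \pref{lem:basic}.
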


This should be compared to \cite[Thm. 4]{Shkredov06} which achieves a worse bound on $|\Lambda|$
but the significantly better bound $d \leq O(\Ent_{\mu}(f))$.

\begin{proof}
   As in the proof of \pref{thm:bloom}, set $\eta = \delta/\sqrt{2}$ and apply \pref{thm:approx} to
   obtain a density $g = \sum_{i=1}^N c_i R_i$ where each $R_i$ is a degree-$d$ Riesz $\cF$-product
   with $d$ as in \eqref{eq:deg}.
   Now $\gamma \in \Spec_{\delta}(f)$ implies $\gamma \in \Spec_{0}(g)$, which means that $\gamma \in \Spec_0(R_i)$
   for some $i=1,\ldots,N$.

   To conclude, observe that every element of $\Spec_0(R_i)$ can be written as $\sum_{i=1}^d \e_i \gamma_i$
   for some {\em tuple} $(\gamma_1, \ldots, \gamma_d) \in \Lambda^d$ (recall the proof
   of \pref{lem:basic}).
\end{proof}

\smallskip

In order to prove \pref{thm:chang} for general $G$, we recall the following definition.
Say that a subset $\Lambda \subseteq \hat{G}$ is {\em disassociated}
if 
\[
   \sum_{\gamma \in \Lambda} \e_{\gamma} \gamma = 0 \textrm{ and } \{\e_{\gamma}\} \subseteq \{-1,0,1\} \implies
   \e_{\gamma} = 0 \quad\forall \gamma \in \Lambda\,.
\]
If $\Lambda \subseteq \Spec_{\delta}(f)$ is a {\em maximal} disassociated subset, then
$\Spec_{\delta}(f)$ is covered by $\Lambda$.  Thus the following lemma
finishes the proof of \pref{thm:chang}.
The argument is based on a a proof of Rudin's inequality
credited to I. Z. Ruzsa in \cite{Green04}.

\begin{lemma}
   If $\Lambda \subseteq \Spec_{\delta}(f)$ is disassociated, then
   \[
      |\Lambda| \leq 4 \frac{\Ent_{\mu}(f)}{\delta^2}\,.
   \]
\end{lemma}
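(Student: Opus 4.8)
The plan is to embed $\Lambda$ into a single Riesz product built from the Fourier phases of $f$, and then to compare that product to $f$ via the nonnegativity of relative entropy. We may assume $\delta<1$, since otherwise $\Spec_\delta(f)=\emptyset$ and there is nothing to prove. First, for each $\gamma\in\Lambda$ I would use $|\hat f(\gamma)|>\delta$ to pick a unit complex scalar $c_\gamma$ so that
\[
\phi_\gamma\ :=\ \Re\bigl(c_\gamma u_\gamma\bigr)\ =\ \tfrac12\bigl(c_\gamma u_\gamma+\overline{c_\gamma}\,u_{-\gamma}\bigr)
\]
is a real-valued function with $\|\phi_\gamma\|_\infty\le 1$ and $\E_\mu[f\phi_\gamma]=|\hat f(\gamma)|>\delta$; this is possible because $|\E_\mu[f u_\gamma]|=|\hat f(\gamma)|$, so rotating by a suitable $c_\gamma$ makes $c_\gamma\,\E_\mu[f u_\gamma]$ real and equal to $|\hat f(\gamma)|$.

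Next I would fix a parameter $t\in(0,1)$ and form the Riesz product $R_t=\prod_{\gamma\in\Lambda}(1+t\phi_\gamma)$. Two facts drive the argument. (i) $R_t>0$ on $G$, since every factor lies in $[1-t,1+t]$. (ii) $\E_\mu R_t=1$: expanding the product writes $R_t$ as a linear combination of characters $u_{\sum_{\gamma\in S}\varepsilon_\gamma\gamma}$ indexed by $S\subseteq\Lambda$ and $\varepsilon\in\{\pm1\}^S$, the term $S=\emptyset$ contributing the constant $1$; applying $\E_\mu$ annihilates every character other than $u_0$, and $\sum_{\gamma\in S}\varepsilon_\gamma\gamma=0$ forces $S=\emptyset$ because $\Lambda$ is disassociated (pad $\varepsilon$ by zeros on $\Lambda\setminus S$ and invoke the definition). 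Hence $R_t\in\Delta_G$.

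Then, since $R_t>0$ everywhere, $0\le\D_\mu(f\dmid R_t)=\Ent_\mu(f)-\E_\mu[f\log R_t]$, so
\[
\Ent_\mu(f)\ \ge\ \E_\mu[f\log R_t]\ =\ \sum_{\gamma\in\Lambda}\E_\mu\bigl[f\log(1+t\phi_\gamma)\bigr]\,.
\]
When $t\le\tfrac12$ we have $t\phi_\gamma\in[-\tfrac12,\tfrac12]$, so the elementary bound $\log(1+y)\ge y-y^2$ gives
\[
\E_\mu\bigl[f\log(1+t\phi_\gamma)\bigr]\ \ge\ t\,\E_\mu[f\phi_\gamma]-t^2\,\E_\mu[f\phi_\gamma^2]\ \ge\ t\delta-t^2\,,
\]
using $\E_\mu[f\phi_\gamma]>\delta$ and $\E_\mu[f\phi_\gamma^2]\le\|\phi_\gamma\|_\infty^2\,\E_\mu f=1$. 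Summing over $\gamma\in\Lambda$ and choosing $t=\delta/2$ (legitimate since $\delta/2<\tfrac12$) yields $\Ent_\mu(f)\ge|\Lambda|\,(\tfrac{\delta^2}{2}-\tfrac{\delta^2}{4})=|\Lambda|\,\delta^2/4$, which is the claimed bound.

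The step requiring the most care is (ii): after inserting the complex phases $c_\gamma$, one must check that no cross term of the expansion feeds into $\hat R_t(0)$, and this is exactly where disassociativity — rather than mere linear independence over a field, or the coarser ``covered'' relation of \pref{lem:basic} — is indispensable; it is also the feature that, via the phase rotation above, produces the sharp constant $4$ instead of the $8$ one would get from $\pm1$ signs alone. The remaining ingredients — the scalar optimization in $t$, the inequality $\log(1+y)\ge y-y^2$ on $[-\tfrac12,\tfrac12]$, and $\D_\mu(\cdot\dmid\cdot)\ge 0$ — are entirely routine.
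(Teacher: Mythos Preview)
Your proof is correct. It takes a genuinely different route from the paper's, though the two arguments are in some sense dual to one another. The paper proceeds by introducing the notion of a \emph{Laplace pseudorandom} family and showing (\pref{lem:laplace}, via weak duality for the entropy-minimization program) that such families satisfy $\sum_{\f} \langle f,\f\rangle^2 \leq 2\,\Ent_\mu(f)$; it then verifies Laplace pseudorandomness for $\{\Re u_\gamma : \gamma\in\Lambda\}$ and $\{\Im u_\gamma : \gamma\in\Lambda\}$ separately using the Bernstein-type bound $e^{tx}\leq \cosh t + x\sinh t$, expanding the resulting product of $(\cosh + \f\sinh)$ factors and invoking disassociativity to kill the cross terms. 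You instead build the Riesz product $R_t$ directly, use disassociativity to normalize it, and compare $f$ to $R_t$ via $\D_\mu(f\dmid R_t)\geq 0$ together with $\log(1+y)\geq y-y^2$. So the paper passes from an exponential of a sum to a product (and bounds $\cosh$), while you pass from the log of a product to a sum (and bound $\log(1+y)$); both reduce to the same combinatorial use of disassociativity. Your phase rotation $c_\gamma$ is a clean device that replaces the paper's separate treatment of real and imaginary parts, and your argument is shorter and entirely self-contained; the paper's route, on the other hand, isolates Laplace pseudorandomness as a reusable black box.
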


\begin{proof}
   Let $\cF_1 = \{ \Re u_{\gamma}: \gamma \in \Lambda\}, \cF_2 = \{ \Im u_{\gamma} : \gamma \in \Lambda\}$.

   \begin{claim}\label{claim:laplace}
      The families $\cF_1$ and $\cF_2$ are Laplace pseudorandom.
   \end{claim}

   Given \pref{claim:laplace}, we have
   \[
      |\Lambda| \delta^2 \leq \sum_{\f \in \cF_1 \cup \cF_2} \langle f,\f\rangle^2 \leq 4\, \Ent_{\mu}(f)\,,
   \]
   where the first inequality follows from $\Lambda \subseteq \Spec_{\delta}(f)$ and the second is \pref{lem:laplace}.

   So let us turn to the proof of \pref{claim:laplace}.
   We prove it for $\cF_1$ as the proof for $\cF_2$ is essentially identical.
   We require the following two basic facts:  For any $t \in \R$ and $x \in [-1,1]$,
   \begin{align}
      e^{tx} &\leq \frac{e^t+e^{-t}}{2} + x \frac{e^t - e^{-t}}{2} = \cosh(t) + x \sinh(t)\label{eq:taylor1}\,, \\
      \cosh(t) &= \sum_{k \geq 0} \frac{t^{2k}}{(2k)!} \leq \sum_{k \geq 0} \frac{t^{2k}}{2^k k!} = e^{t^2/2}\,.
    \label{eq:taylor2}
   \end{align}
   The first uses the fact that $x \mapsto e^{tx}$ is convex.

   Now write
   \begin{equation}\label{eq:bernstein}
      \E_{\mu}\left[\exp\left(\sum_{\f \in \cF_1} \lambda_{\f} \f\right)\right] \stackrel{\eqref{eq:taylor1}}{\leq}
      \E_{\mu} \prod_{\f \in \cF_1} \left(\cosh(\lambda_{\f}) + \f \sinh(\lambda_{\f})\right)
   \end{equation}
   Recalling that every $\f \in \cF_1$ is of the form $\f = \Re u_{\gamma} = \frac12 (u_{\gamma} + u_{-\gamma})$ for some $\gamma \in \Lambda$, we see
   that the right-hand side of \eqref{eq:bernstein}
   breaks into a linear combination of characters $u_{\alpha}$ such that
   \[
   \alpha = \sum_{\gamma \in \Lambda} \e_{\gamma} \gamma\,, \quad \e_{\gamma} \in \{-1,0,1\}\,.
   \]
   But $\E_{\mu}[u_{\alpha}]=0$ unless $\alpha=0$.
   By disassociativity of $\Lambda$, this can only happen if $\e_\gamma = 0$ for all $\gamma \in \Lambda$.
   In particular, we conclude that
   \[
      \E_{\mu}\left[\exp\left(\sum_{\f \in \cF_1} \lambda_{\f} \f\right)\right] \leq \prod_{\f \in \cF_1} \cosh(\lambda_{\f})
      \leq \exp\left(\frac12 \sum_{\f \in \cF_1} \lambda_{\f}^2\right)\,,
   \]
   implying that $\cF_1$ is Laplace pseudorandom and completing the argument.
\end{proof}

\smallskip

\subsection*{Acknowledgements}

We thank Thomas Bloom, Prasad Raghavendra, and Julia Wolf for enlightening discussions,
and Thomas Vidick for detailed comments on an initial draft of this manuscript.
We are also grateful to Julia Wolf for pointing out the reference \cite{Shkredov06}.

\bibliographystyle{alpha}
\bibliography{chang}

\end{document}